\newtheorem{thm}{Theorem}
\newtheorem{lem}[thm]{Lemma}
\newtheorem{cor}[thm]{Corollary}
\newtheorem{prop}[thm]{Proposition}
\theoremstyle{definition}
\newtheorem{ex}{Example}
\providecommand{\NN}{\mathbb{N}}
\providecommand{\ZZ}{\mathbb{Z}}
\providecommand{\QQ}{\mathbb{Q}}
\providecommand{\RR}{\mathbb{R}}
\providecommand{\CC}{\mathbb{C}}
\DeclareMathOperator{\n}{n}
\DeclareMathOperator{\I}{I}
\DeclareMathOperator{\B}{B}
\DeclareMathOperator{\A}{A}
\DeclareMathOperator{\arch}{Arch}
\DeclareMathOperator{\sym}{Sym}
\DeclareMathOperator{\str}{Int}
\DeclareMathOperator{\inv}{inv}
\DeclareMathOperator{\rep}{Rep}
\DeclareMathOperator{\irr}{Irr}
\DeclareMathOperator{\mat}{Mat}
\begin{document}

\title[Archimedean quadratic modules on $\ast$-rings]
{A representation theorem for archimedean quadratic modules on $\ast$-rings}

\date{April 2005}

\author{Jakob Cimpri\v{c}}
 
\begin{abstract}
We present a new approach to noncommutative real algebraic geometry
based on the representation theory of $C^\ast$-algebras. 
An important result in commutative real algebraic geometry is 
Jacobi's representation theorem  for archimedean quadratic modules  
on commutative rings, \cite[Theorem 5]{jacobi}. 
We show that this theorem is a consequence of the 
Gelfand-Naimark representation theorem for commutative $C^\ast$-algebras.
A noncommutative version of Gelfand-Naimark theory was studied by
I. Fujimoto in \cite{fuji1,fuji2,fuji3}. We use his results to generalize
Jacobi's theorem to associative rings with involution.
\end{abstract}

\keywords{
Ordered rings with involution,
$C^\ast$-algebras and their representations,
noncommutative convexity theory, real algebraic geometry,
Math. Subj. Class. (2000): Primary 16W80; Secondary 46L05, 46L89, 14P99
}

\address{Cimpri\v c Jakob, University of Ljubljana, Faculty of Mathematics and Physics,
Department of Mathematics, Jadranska 19, SI-1000 Ljubljana, Slovenija}
\address{e-mail: \texttt{jaka.cimpric@fmf.uni-lj.si}}

\maketitle

\section{Introduction}

Jacobi's representation theorem \cite[Theorem 5]{jacobi} is important in the study
of positive polynomials on compact semialgebraic sets.
Its history and applications are surveyed in \cite{prestel2}. We will
\begin{itemize}
\item give a functional-analytic proof of this theorem,
\item extend it from commutative rings to noncommutative $\ast$-rings.
\end{itemize}
Our motivation comes from noncommutative real algebraic geometry; see \cite{mm2}.
We hope that this paper will convince the reader that 
irreducible $\ast$-representations should be considered 
as points of this geometry. The problem of extending 
Positivstellensatz to this context remains open.

Our work may also be of some interest to functional analysts: 
In Section \ref{env} we characterize real $C^\ast$-algebras within the class
$$\mathcal{M} = \{(A,M) \colon M \text{ is an } m\text{-admissible wedge on an involutive ring } A\}$$
and extend the notion of an enveloping $C^\ast$-algebra
from the subclass of Banach $\ast$-algebras to $\mathcal{M}$. 
In Section \ref{cp} we state and prove the real version of 
Fujimoto's CP-convexity Gelfand-Naimark theorem \cite{fuji3}.

As a motivation for later sections we present now our version of
Jacobi's representation theorem for the special case of 
\textit{commutative} $\ast$-rings.
Let $R$ be a commutative unital ring with involution $\ast$, write 
\[\begin{array}{ccc}
\sym(R) = \{a \in R \colon a=a^*\} & \text{and} &
R^+ = \{\sum_i a_i a_i^* \colon a_i \in R\}.
\end{array}\]
A subset $M$ of $\sym(R)$ is an \textit{archimedean quadratic module}
if  $-1 \not\in M$, $1 \in M$, $M+M \subseteq M$, $R^+ M \in M$ and
for every $a \in \sym(R)$ there exists $n \in \NN$ such that $n \pm a \in M$. 
Write
$$\arch(M) = \{a \in \sym(R) \colon \forall n \in \NN \ \exists k \in \NN \colon k(1+na) \in M\}.$$
The conjugation  $\phi \mapsto \overline{\phi}$,
$\overline{\phi}(a)=\overline{\phi(a)}$ is an automorphism of order 2 on
$$X_M = \{ \phi \colon R \to \CC \colon \phi \text{ a} \ast \text{-ring homomorphism such that }
\phi(M) \ge 0\}.$$
We equip $X_M$ with the topology of pointwise convergence. Finally, let
$$\mathcal{C}(X_M,-) = \{f \in \mathcal{C}(X_M,\CC) \colon f(\overline{\phi}) = \overline{f(\phi)}
\text{ for every } \phi \in X_M\}$$
with the natural involution $f \mapsto f^\ast$, $f^\ast(\phi) = f(\overline{\phi})$. 
In the original Jacobi's theorem $\ast=$ identity, and hence
$\mathcal{C}(X_M,-) = \mathcal{C}(X_M,\RR)$ and all elements of $X_M$ are real valued.

\begin{thm}
\label{rep}
Let $M$ be an archimedean quadratic module 
on a commutative unital $\ast$-ring $R$. Then the space $X_M$
is nonempty and compact. Moreover, the mapping 
$$\Phi \colon R \to \mathcal{C}(X_M,-), \quad \Phi(a)(\phi)=\phi(a),$$
is a homomorphism of unital $\ast$-rings, $\QQ \cdot \Phi(R)$ is dense in $\mathcal{C}(X_M,-)$, and
$$\Phi^{-1}(\mathcal{C}^+(X_M,-)) = \arch(M).$$
\end{thm}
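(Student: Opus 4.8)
The plan is to realize the pair $(R,M)$ inside a commutative real $C^\ast$-algebra and let the real version of Gelfand--Naimark do the work, exactly as the abstract promises. First I would turn $M$ into a norm. On $\sym(R)$ define the order-unit seminorm $\|a\|_\circ=\inf\{\lambda\in\QQ_{>0}\colon \lambda\pm a\in M\}$, which is finite for every $a$ by the archimedean axiom, and extend it to all of $R$ by $\|a\|=\|a^\ast a\|_\circ^{1/2}$. The axioms $1\in M$, $R^+M\subseteq M$ (hence $R^+\subseteq M$) and $M+M\subseteq M$, together with commutativity, make $\|\cdot\|$ a $C^\ast$-seminorm; the only place $-1\notin M$ is used is to see $\|1\|=1\neq0$ (if $\lambda-1\in M$ with $0<\lambda<1$ then, multiplying by the positive integer wiping out the denominator and using $R^+\subseteq M$, one lands $-1\in M$), so the seminorm is nonzero. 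Let $A$ be the completion of $\QQ\otimes_{\ZZ}R$ modulo the kernel of $\|\cdot\|$; it is a unital commutative real $C^\ast$-algebra, and $\rho\colon R\to A$ denotes the canonical $\ast$-homomorphism, with $\QQ\cdot\rho(R)$ dense by construction.

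Next I would match the two descriptions of the spectrum. Every $\phi\in X_M$ is $\|\cdot\|$-contractive: for $a\in\sym(R)$ the relations $\lambda\pm a\in M$ give $|\phi(a)|\le\lambda$, hence $|\phi(a)|^2=\phi(a^\ast a)\le\|a\|^2$ in general, so $\phi$ extends uniquely to a complex character of $A$; conversely every complex character of $A$ restricts to a $\ast$-homomorphism $R\to\CC$ nonnegative on $M$, i.e.\ to an element of $X_M$. This correspondence is a homeomorphism for the pointwise/weak-$\ast$ topologies and intertwines the two conjugations $\phi\mapsto\overline\phi$. The real Gelfand--Naimark theorem now supplies an isometric $\ast$-isomorphism $A\cong\mathcal{C}(X_M,-)$ whose composition with $\rho$ is precisely $\Phi$. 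Reading off consequences: the spectrum of a nonzero unital commutative real $C^\ast$-algebra is nonempty and compact, so $X_M$ is nonempty and compact; $\Phi$ is a homomorphism of unital $\ast$-rings because $\rho$ and the Gelfand transform are; and the image of the dense set $\QQ\cdot\rho(R)$ is the dense set $\QQ\cdot\Phi(R)$.

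It remains to prove $\Phi^{-1}(\mathcal{C}^+(X_M,-))=\arch(M)$. Under $A\cong\mathcal{C}(X_M,-)$ the cone $\mathcal{C}^+(X_M,-)$ corresponds to the positive cone $A^+$, so the claim becomes $\rho^{-1}(A^+)\cap\sym(R)=\arch(M)$. The inclusion $\supseteq$ is soft: if $a\in\arch(M)$ then for each $n$ some $k(1+na)\in M\subseteq A^+$, whence $\tfrac1n+\rho(a)=\tfrac1n\rho(1+na)\in A^+$, and letting $n\to\infty$ gives $\rho(a)\in A^+$ since $A^+$ is closed. The substance is the reverse inclusion, which I regard as the main obstacle and would isolate as a lemma: if $a\in\sym(R)$ and $\rho(1+na)$ is bounded below by a positive rational multiple of $1$, then $k(1+na)\in M$ for some $k\in\NN$. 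Granting this, $\rho(a)\in A^+$ forces $\rho(1+na)\ge 1$ for every $n$, so $a\in\arch(M)$.

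The lemma is where separation enters. Writing $x=1+na$ and supposing $kx\notin M$ for all $k$, the element $x$ lies outside the convex cone $\QQ_{>0}\cdot M$ of $\sym(\QQ\otimes_{\ZZ}R)$, a cone having $1$ as an interior (order-unit) point; Eidelheit/Hahn--Banach separation then produces a state $L$ with $L(M)\ge0$, $L(1)=1$ and $L(x)\le0$. Such an $L$ is automatically $\|\cdot\|_\circ$-continuous and extends to a positive normalized functional $\widetilde L$ on $\sym(A)$ with $\widetilde L(\rho(x))=L(x)\le0$; but $\rho(x)\ge\delta 1$ for some $\delta\in\QQ_{>0}$ forces $\widetilde L(\rho(x))\ge\delta>0$, a contradiction. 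The only delicate points to watch are the routine verification that $\|\cdot\|$ really is a $C^\ast$-seminorm (here commutativity is essential, since it makes the order-unit seminorm submultiplicative and yields $\|s^2\|_\circ=\|s\|_\circ^2$ for $s\in\sym(R)$) and the bookkeeping that the character correspondence is a homeomorphism respecting the conjugations; neither is conceptually hard once the cone $\QQ_{>0}\cdot M$ with order unit $1$ is in place.
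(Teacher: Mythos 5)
Your proposal is correct in outline and shares the paper's global architecture: both turn $M$ into a $C^\ast$-seminorm (your $\|a\|=\|a^\ast a\|_\circ^{1/2}$ coincides with the paper's $\n_M(a)=\inf\{r\colon r^2-aa^\ast\in M\}$ by Lemma \ref{c2c}), complete to a commutative real $C^\ast$-algebra, identify its characters with $X_M$, and invoke the real Gelfand--Naimark theorem for nonemptiness, compactness, density and the $\ast$-homomorphism property. The genuine divergence is in the hard inclusion $\Phi^{-1}(\mathcal{C}^+(X_M,-))\subseteq\arch(M)$. You isolate this as a separation lemma and prove it by the classical Kadison--Dubois/Jacobi route: Eidelheit separation of $x$ from the cone $\QQ_{>0}\cdot M$ (with $1$ as algebraic interior point), producing an $M$-positive state that extends by continuity and contradicts $\rho(x)\ge\delta$. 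That argument is sound --- the continuity estimate $|L(s)|\le\|s\|_\circ L(1)$ you need is exactly Proposition \ref{formeq} of the paper --- but it is more work than necessary, and it partly undercuts the paper's stated aim of \emph{deriving} Jacobi's theorem from Gelfand--Naimark rather than re-running the separation argument. The paper instead gets the same identity for free from the order-unit characterization of positivity in a $C^\ast$-algebra: $\|x\|\le r$ iff $r\pm x\in A^+$ iff $\spec(x)\subseteq[-r,r]$ (Example \ref{pos1}), so $j(x)\in (R_M)^+$ iff $\n_M(r-x)\le r$ for suitable $r$, which by Lemma \ref{pos2} says precisely that $x$ lies in the $\n_M$-closure of $M$, i.e.\ $x\in\arch(M)$ in the sense $\forall n\,\exists k\colon k(1+na)\in M$. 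So where you pay with a fresh Hahn--Banach argument, the paper pays only with the $C^\ast$-identity already encapsulated in the machinery; your route is more self-contained on that one point (it would survive with weaker input than the full Gelfand--Naimark isomorphism), while the paper's is shorter and generalizes directly to the noncommutative setting of Theorem \ref{rep1}. Two small inaccuracies worth flagging, neither fatal: commutativity is not in fact needed for submultiplicativity of the seminorm (Theorem \ref{norm} proves it for arbitrary $\ast$-rings), and you should also record property (6) of Theorem \ref{norm} ($\n(a)^2\le\n(aa^\ast+bb^\ast)$), which is what guarantees the completion is a real $C^\ast$-algebra to which the real Gelfand--Naimark theorem applies.
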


\begin{proof}
Let $R$ and $M$ be as above. For every $a \in R$ write
$$\n_M(a) = \inf\{\frac{r}{s} \colon r,s \in \NN, \ r^2-s^2 aa^* \in M\}.$$
We will prove in Section \ref{env} that $\I(M) = \{a \in R \colon \n_M(a)=0\}$ 
is a $\ast$-ideal of $R$ and that $\n_M$ induces a norm on $R/\I(M)$.
Moreover, the completion $R_M$ of $R/\I(M)$ in this norm is an abelian  real 
$C^\ast$-algebra. Also, the canonical mapping $j \colon R \to R_M$ is a
homomorphism of $\ast$-rings and $j^{-1}((R_M)^+) = \arch(M)$.

Let $Y_M$ the set of all real $\ast$-algebra homomorphisms $R_M \to \CC$
with the topology of pointwise convergence.
We will see in Section \ref{up} that the mapping $Y_M \to X_M$, 
$\psi \mapsto \psi \circ j$ has an inverse $r \colon X_M \to Y_M$,
which factors an element of $X_M$ through $R/\I(M)$
and extends it by continuity to an element of $Y_M$.
The mapping $r$ is a homeomorphism with respect to the topologies of
pointwise convergence on $X_M$ and $Y_M$ and it commutes 
with the conjugations on $X_M$ and $Y_M$. It induces a mapping
$\tilde{r} \colon \mathcal{C}(Y_M,-) \to \mathcal{C}(X_M,-), f \mapsto f \circ r,$
which is one-to-one and onto, an isometry, and satisfies
$\tilde{r}^{-1}(\mathcal{C}^+(X_M,-)) = \mathcal{C}^+(Y_M,-)$.

Note that $Y_M$ coincides with the spectral space $\Omega(R_M)$;
see \cite[Definition 2.7.1, Theorem 5.2.10 and Theorem 3.2.3 $(7) \Rightarrow (4)$]{li}.
Since $\Omega(R_M)$ is  nonempty by \cite[Theorem 2.7.3]{li} and 
compact by \cite[Theorem  2.7.2 $(4)$]{li}, so also are $X_M$ and $Y_M$.
The Gelfand transform
$\Gamma \colon R_M \to \mathcal{C}(Y_M,-), 
\Gamma(a)(\psi) = \psi(a)$
is a $\ast$-isomorphism by \cite[Proposition 5.1.4]{li} and satisfies 
$\Gamma^{-1}(\mathcal{C}^+(Y_M,-))=(R_M)^+$ by \cite[Proposition 5.2.2 $(3)$
and Theorem 2.7.2 $(4)$]{li}. 

The mapping $\Phi$ can be decomposed as $\Phi = i \circ \Gamma \circ \tilde{r}$.
Since $j, \Gamma, \tilde{r}$ are homomorphisms, so is $\Phi$.
Since $\QQ \cdot j(R)$ is dense in $R_M$ and $\Gamma, \tilde{r}$ are isometries,
it follows that $\QQ \cdot \Phi(R) = \tilde{r}(\Gamma(\QQ \cdot j(R)))$ is dense in
$\mathcal{C}(X_M,-)$. 
Since $\tilde{r}^{-1}(\mathcal{C}^+(X_M,-)) = \mathcal{C}^+(Y_M,-)$,
$\Gamma^{-1}(\mathcal{C}^+(Y_M,-))=(R_M)^+$ and $j^{-1}((R_M)^+) = \arch(M)$,
it follows that $\Phi^{-1}(\mathcal{C}^+(X_M,-)) = \arch(M)$.
\end{proof}

The main difference in the noncommutative case is that we replace homomorphisms
by topologically irreducible representations on a Hilbert space of a sufficiently high dimension.
A noncommutative version of Gelfand's theory is provided by Fujimoto's CP-convexity theory.
In Section \ref{comments} we shall compare our theory with the theory
of $\ast$-orderings on $\ast$-rings. Recent generalizations of Jacobi's theorem
by M. Marshall \cite[Theorem 2.3]{mrep} and I. Klep \cite{klep} are not considered here.

\section{Quadratic modules, definition and examples}
\label{defs}

Let $A$ be a unital ring with involution and $\sym(A) = \{a \in A \vert a=a^*\}$.
A subset $M \subset \sym{A}$ is called a \textit{quadratic module} if
\begin{enumerate}
\item $-1 \not\in M$,  
\item $1 \in M$,
\item $M+M \subseteq M$,
\item $aMa^* \subseteq M$ for every $a \in A$.
\end{enumerate}
In \cite{sch}, the term \textit{$m$-admissible wedge} is used.
If $\ast=$ identity, then our definition coincides with the definition of a
quadratic module in \cite{prestel}.

Write $A^+$ for the set of all finite sums $\sum_i a_i a_i^*$.
This is consistent with the notation $\ZZ^+,\QQ^+,\RR^+,\CC^+$.
Clearly, $A^+ \subseteq M$ for every quadratic module $M$. Thus:

\begin{lem} 
The following are equivalent:
\begin{enumerate}
\item $-1 \not\in A^+$,
\item $A^+$ is a quadratic module on $A$,
\item $A$ has at least one quadratic module.
\end{enumerate}
\end{lem}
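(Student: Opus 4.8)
The plan is to prove the three conditions equivalent by establishing the cycle $(1)\Rightarrow(2)\Rightarrow(3)\Rightarrow(1)$, since each individual implication reduces to a direct check against the definition of a quadratic module stated just above.

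First I would prove $(1)\Rightarrow(2)$ by verifying the four defining properties for the candidate module $M=A^+$, together with the inclusion $A^+\subseteq\sym(A)$. The latter holds because each generator satisfies $(aa^*)^*=aa^*$ and sums of symmetric elements are symmetric. Property $(1)$ of a quadratic module is exactly the standing hypothesis $-1\notin A^+$; property $(2)$ follows from $1=1\cdot 1^*$ using $1^*=1$; and property $(3)$ is immediate, as the sum of two finite sums of the form $\sum_i a_ia_i^*$ is again of that form. The only computation worth recording is property $(4)$: for $a\in A$ and $\sum_i b_ib_i^*\in A^+$, the anti-multiplicativity of the involution, $(ab_i)^*=b_i^*a^*$, gives $a\bigl(\sum_i b_ib_i^*\bigr)a^*=\sum_i(ab_i)(ab_i)^*\in A^+$.

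The step $(2)\Rightarrow(3)$ is trivial: if $A^+$ is itself a quadratic module, then $A$ certainly admits at least one. For the closing implication $(3)\Rightarrow(1)$ I would invoke the observation already recorded before the statement, that $A^+\subseteq M$ for every quadratic module $M$; this follows because $1\in M$, property $(4)$ yields $a\cdot 1\cdot a^*=aa^*\in M$, and property $(3)$ closes these elements under finite sums. Then for any quadratic module $M$ on $A$, property $(1)$ of $M$ gives $-1\notin M$, and combining this with $A^+\subseteq M$ forces $-1\notin A^+$. I do not expect a genuine obstacle here: the entire argument is bookkeeping against the axioms, and the only places where the \emph{involution} (rather than the bare ring structure) is used are the symmetry of the generators $aa^*$ and the closure computation in property $(4)$; arranging the proof as a cycle spares us from verifying any implication twice.
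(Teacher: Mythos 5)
Your proof is correct and follows exactly the route the paper intends: the paper gives no explicit proof beyond the remark immediately preceding the lemma that $A^+ \subseteq M$ for every quadratic module $M$, which is precisely the key observation you use for $(3)\Rightarrow(1)$, with the remaining implications being the same routine verification of the axioms for $M=A^+$.
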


A quadratic module $M$ on $A$ is \textit{archimedean} if for every 
$a \in A$ there exist $n \in \NN$ such that $n - aa^* \in M$.

\begin{ex}
If $A = \RR[X_1,\ldots,X_n]$ with $\ast=$ identity
then $-1 \not\in A^+$.
The quadratic module $A^+$ is not archimedean. 
A quadratic module $M \subset A$ is archimedean
if and only if there exists $m \in \NN$ such that 
$m - \sum_{i=1}^n X_i^2 \in M$; see \cite[5.2.2: Putinar's criterion]{mm1}.
\end{ex}

\begin{ex}
Let $A$ be a real or complex Banach $\ast$-algebra.
Then $A^+$ is an archimedean quadratic module on $A$.
\end{ex}

\begin{ex}
\label{group}
Let $A = k[G]$ where $G$ is any group and $k$ is $\QQ$, $\RR$ or $\CC$.
For every element $a = \sum_i \alpha_i g_i \in A$ write
$$a^* = \sum_i \overline{\alpha}_i g_i^{-1}, \quad
\Vert a \Vert_1 = \sum_i \vert \alpha_i \vert.$$
Clearly, $a \mapsto a^*$ is an involution on $A$ 
and $\Vert \cdot \Vert_1$ is a norm on the $\ast$-ring $A$. Since
$$\Vert a \Vert_1^2 -aa^* = \sum_{i < j} \vert \alpha_i \alpha_j \vert 
\big(1-\frac{\alpha_i \overline{\alpha}_j}{\vert \alpha_i \alpha_j \vert}g_i g_j^{-1} \big)
\big(1-\frac{\alpha_i \overline{\alpha}_j}{\vert \alpha_i \alpha_j \vert}g_i g_j^{-1} \big)^* \in M$$
for every $a \in A$,  $A^+$ is an archimedean quadratic module on $A$. 
\end{ex}

Finally, we have several general constructions for producing
new quadratic modules from old ones.

\begin{ex}
\label{gen}
For every quadratic module $M$ on $A$ and for every subset $S \subset \sym(A)$ write 
$$M_S := \{\sum_{i,j} a_{ij} c_i a_{ij}^* \colon a_{ij} \in A, c_i \in M \cup S\}.$$
Note that $M(S)$ is a quadratic module if and only if $-1 \not\in M_S$. In this case
$M_S$ is the smallest quadratic module which contains $M$ and $S$.
\end{ex}

\begin{ex}
\label{qq}
Let $M$ be a quadratic module in $A$. Then
$$M^e := \{a \in A \colon ka \in M \mbox{  for  some } k \in \NN \}$$
is a quadratic module on $A$, 
$$M \otimes \QQ^+ := \{\sum_i m_i \otimes r_i \colon  m_i \in M, r_i \in \QQ^+\}$$
is a quadratic module on $A \otimes \QQ$, and
$$(M \otimes \QQ^+) \cap A = M^e.$$
This example shows that we may always assume without loss of generality that
$\QQ \subset A$ and $M = M^e$. (This works even if $(A,+)$ has nonzero torsion.)
\end{ex}

\begin{ex} 
\label{com}
Let $A$ be a unital $\ast$-ring. The complexification $A^\circ$ of $A$
is the set $A \times A$ with the following operations:
\begin{enumerate}
\item $(x,y)+(u,v) = (x+u,y+v)$, 
\item $-(x,y) = (-x,-y)$,
\item $(x,y)(u,v) = (xu-yv,xv+yu)$,
\item $(x,y)^* = (x^*,-y^*)$.
\end{enumerate}
Note that $A^\circ$ is also a unital $\ast$-ring with unit $(1,0)$.
The element $i = (0,1)$ behaves as imaginary unit.

Let $M$ be a quadratic module on $A$. Define 
$$M^\circ := \{\sum_i (a_i,b_i) (m_i,0) (a_i,b_i)^* \colon a_i,b_i \in A, m_i \in M\}.$$
Note that $M^\circ$ is a quadratic module on $A^\circ$.
\end{ex}

\begin{ex}
\label{mat}
Let $A$ be a unital $\ast$-ring and $n \in \NN$. The set $\mat_n(A)$
of all $n \times n$ matrices with entries in $A$ is a unital $\ast$-ring 
with involution $[a_{ij}]^* = [a_{ji}^*]$.

Let $M$ be a quadratic module on $A$. We define 
$$M_n := \{ \sum_j \begin{bmatrix} a_{1j} \\ \vdots \\ a_{nj} \end{bmatrix} m_j
\begin{bmatrix} a_{1j}^* &  \cdots & a_{nj}^* \end{bmatrix} \colon
m \in M, a_{ij} \in A\}.$$
Clearly, $M_n$ is a quadratic module on $\mat_n(A)$.
\end{ex}

\section{The $C^*$-algebra of an archimedean quadratic module}
\label{env}

From now on we assume that every $\ast$-ring is unital and contains $\QQ$.

\begin{lem}
\label{c2c}
Let $M$ be a quadratic module on a $\ast$-ring $A$.
For every $c \in \sym(A)$ and every $r \in \QQ^+$ we have
$r^2-c^2 \in M$ if and only if $r \pm c \in M$.
\end{lem}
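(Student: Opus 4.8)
The plan is to reduce everything to two algebraic identities together with the observation that $M$ is closed under multiplication by positive rationals. First note that, since $\QQ\subseteq A$ lies in the center of $A$ and is fixed by the involution, the element $r$ is central and symmetric; hence $r$ commutes with $c$, and $(r\pm c)^*=r\pm c$, $(r^2-c^2)^*=r^2-c^2$, so all elements in sight lie in $\sym(A)$. To see that $M$ is closed under scaling by $\lambda=p/q\in\QQ^+$ with $p,q\in\NN$, I would conjugate: for $m\in M$ we have $\frac1q m\frac1q=\frac1{q^2}m\in M$ by axiom (4), and summing $pq$ copies gives $\frac{pq}{q^2}m=\lambda m\in M$ by axiom (3). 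In particular, once I produce an element of the form $2r(\cdot)\in M$ I may divide by $2r$, provided $r>0$; the value $r=0$ makes this division vacuous and I would treat it on its own, the substantive content being the case $r>0$.

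For the implication $r^2-c^2\in M\Rightarrow r\pm c\in M$, I would use
\[
2r(r+c)=(r+c)^2+(r^2-c^2),\qquad 2r(r-c)=(r-c)^2+(r^2-c^2),
\]
which hold because $r$ commutes with $c$. On the right-hand sides $(r\pm c)^2=(r\pm c)(r\pm c)^*\in A^+\subseteq M$ and $r^2-c^2\in M$ by hypothesis, so the right-hand sides lie in $M$ by axiom (3); dividing by $2r$ yields $r\pm c\in M$.

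For the converse $r\pm c\in M\Rightarrow r^2-c^2\in M$ the key is the symmetric ``sandwich'' identity
\[
(r-c)(r+c)(r-c)+(r+c)(r-c)(r+c)=2r(r^2-c^2),
\]
again valid because $r$ is central. Here the first summand equals $(r-c)(r+c)(r-c)^*$ with $r+c\in M$, and the second equals $(r+c)(r-c)(r+c)^*$ with $r-c\in M$, so both belong to $M$ by axiom (4); their sum is therefore in $M$, and dividing by $2r$ gives $r^2-c^2\in M$.

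Verifying the identities is routine and uses only that $r$ commutes with $c$, not commutativity of $A$, so the argument is genuinely noncommutative. The one point requiring care—the main obstacle—is locating the correct conjugation identity in the converse direction: unlike the forward direction, where squares suffice, one must express $r^2-c^2$ (up to the central factor $2r$) as a sum of elements of the admissible form $a\,m\,a^*$ with $m\in\{r+c,\,r-c\}$, and the cubic symmetric sandwiches above are exactly what is needed. The positivity $r>0$ enters only through the final division by $2r$.
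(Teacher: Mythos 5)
Your proof is correct and is essentially the paper's own argument: the forward direction uses the identity $2r(r\pm c)=(r\pm c)^2+(r^2-c^2)$ and the converse uses the same cubic sandwich identity $(r-c)(r+c)(r-c)+(r+c)(r-c)(r+c)=2r(r^2-c^2)$, followed in each case by division by $2r$. The extra details you supply (closure of $M$ under $\QQ^+$-scaling via conjugation by $1/q$, symmetry of the elements involved) are routine and correct, and your deferral of $r=0$ is harmless since the paper's proof likewise divides by $2r$ and the statement is only intended for $r>0$.
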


\begin{proof}
If $r^2-c^2 \in M$, then 
$$r \pm c = \frac{1}{2r}\big((r \pm c)^2+(r^2-c^2)\big) \in M.$$
If $r \pm c \in M$, then
$$r^2-c^2 = \frac{1}{2r} \big( (r-c)(r+c)(r-c)+ (r+c)(r-c)(r+c) \big) \in M.$$
\end{proof}

For every element $a \in A$ write
$$\n_M(a) = \inf\{r \in \QQ^+ \colon r^2-aa^* \in M\}.$$
We use the convention $\inf \emptyset = \infty$.

\begin{thm}
\label{norm}
Let $M$ be a quadratic module on a $\ast$-ring $A$ and $\n = \n_M$.
For every $a, b \in A$ and every $t \in \QQ$ we have
\begin{enumerate}
\item $\n(ta) = \vert t \vert \n(a)$,
\item $\n(a) = \n(a^*)$,
\item $\n(ab) \le \n(a)\n(b)$,
\item $\n(a+b) \le \n(a)+\n(b)$,
\item $\n(aa^*) = \n(a)^2$,
\item $\n(a)^2 \le \n(aa^*+bb^*)$.
\end{enumerate}
If there exists an element $i$ in the center of $A$ such that 
$i^*=-i$ and $i^2=-1$, then the assertion (1) holds for every $t \in \QQ(i)$.
\end{thm}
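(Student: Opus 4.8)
The plan is to treat $\n=\n_M$ as a would-be $C^\ast$-seminorm and to prove each assertion by the same scheme: given admissible witnesses, namely $r\in\QQ^+$ with $r^2-aa^*\in M$ and $s\in\QQ^+$ with $s^2-bb^*\in M$, I will exhibit an explicit element of $M$ certifying the claimed bound and then pass to infima. Throughout I will use three facts freely: that $A^+\subseteq M$; that $M$ is closed under multiplication by positive rationals (so $r^2-q\,aa^*\in M$ iff $(r^2/q)-aa^*\in M$ for $q\in\QQ^+$, which follows from axioms (3),(4) and $\QQ\subseteq A$); and Lemma \ref{c2c}. I will also use that each set $\{r\in\QQ^+\colon r^2-aa^*\in M\}$ is upward closed, so infima may be computed along positive witnesses and the degenerate cases ($\n=0$ or $\n=\infty$) are covered by the convention $\inf\emptyset=\infty$.

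Assertion (1) for $t\in\QQ$ is immediate from $(ta)(ta)^*=t^2aa^*$ together with the scaling remark, since $r\mapsto r/|t|$ is then a bijection of $\QQ^+$ (and $\n(0)=0$ because $r^2\in A^+\subseteq M$). Assertions (3) and (4) rest on two completing-the-square identities. For submultiplicativity I conjugate the witness for $b$ by $a$ and write $(st)^2-ab b^*a^*=t^2(s^2-aa^*)+a(t^2-bb^*)a^*$, both summands lying in $M$ by axiom (4) and positive scaling; hence $\n(ab)\le st$. For the triangle inequality I use positivity of $(ta-sb)(ta-sb)^*\in A^+$ together with the identity
\[
(s+t)^2-(a+b)(a+b)^*=\Big(1+\tfrac ts\Big)(s^2-aa^*)+\Big(1+\tfrac st\Big)(t^2-bb^*)+\tfrac{1}{st}(ta-sb)(ta-sb)^*,
\]
whose right-hand side is a sum of positive-rational multiples of elements of $M$; hence $\n(a+b)\le s+t$. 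Passing to infima gives (3) and (4).

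For assertion (5), note $aa^*\in\sym(A)$, so by Lemma \ref{c2c} one has $r^2-(aa^*)^2\in M$ iff $r\pm aa^*\in M$; the summand $r+aa^*$ is automatic, whence $\n(aa^*)=\inf\{r\in\QQ^+\colon r-aa^*\in M\}$. This last infimum equals $\n(a)^2$: if $s^2-aa^*\in M$ then $s^2$ is admissible, while if $r-aa^*\in M$ then $s^2-aa^*\in M$ for every rational $s>\sqrt r$, so $\n(a)\le\sqrt r$. Assertion (6) uses the same reduction: from $r-(aa^*+bb^*)\in M$ and $bb^*\in A^+\subseteq M$ we obtain $r-aa^*\in M$, so $\n(a)^2=\n(aa^*)\le\n(aa^*+bb^*)$. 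For the $\QQ(i)$ refinement, centrality of $i$ with $i^*=-i$ and $i^2=-1$ gives $(ta)(ta)^*=tt^*aa^*=(p^2+q^2)aa^*$ for $t=p+qi$; combining assertion (5) with the rational case of (1) yields $\n(ta)^2=\n((ta)(ta)^*)=(p^2+q^2)\,\n(aa^*)=|t|^2\n(a)^2$, hence $\n(ta)=|t|\,\n(a)$.

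The main obstacle is assertion (2), the symmetry $\n(a)=\n(a^*)$: it is the only point where noncommutativity genuinely intervenes, since a bound $r^2-aa^*\in M$ must be transferred to a bound on $a^*a$, and $aa^*\ne a^*a$ in general. The device I would use is the identity, valid for $r\in\QQ^+$ with $r>0$,
\[
r^2-a^*a=\Big(r-\tfrac1r\,a^*a\Big)^2+\tfrac{1}{r^2}\,a^*(r^2-aa^*)a,
\]
in which the first term lies in $A^+\subseteq M$ and the second in $a^*Ma\subseteq M$ by axiom (4); thus $r^2-aa^*\in M$ forces $r^2-a^*a\in M$, giving $\n(a^*)\le\n(a)$, and interchanging $a$ with $a^*$ yields equality. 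Note that (2) is logically independent of the other assertions, so once its identity is found the remaining passages to infima are routine monotonicity arguments.
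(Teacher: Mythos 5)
Your proposal is correct: every identity you write down checks out, and for assertions (1), (2), (3), (5), (6) and the $\QQ(i)$ refinement you follow essentially the paper's route. In particular your certificate for (2), $r^2-a^*a=\bigl(r-\tfrac 1r a^*a\bigr)^2+\tfrac 1{r^2}\,a^*(r^2-aa^*)a$, is exactly what the paper's appeal to Lemma \ref{c2c} with $c=\tfrac{r^2}{2}-a^*a$ unwinds to, so that is the same proof with the lemma inlined. The one genuine divergence is the triangle inequality (4). The paper derives it from (2), (3) and Lemma \ref{c2c}: it first controls the cross term, proving $2rs\pm(ab^*+ba^*)\in M$ from the identity $4r^2s^2-(ab^*+ba^*)^2=2(r^2s^2-ab^*ba^*)+2(r^2s^2-ba^*ab^*)+(ab^*-ba^*)(ab^*-ba^*)^*$, and then adds the three membership certificates for $(r+s)^2-(a\pm b)(a\pm b)^*$. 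You instead give a single self-contained certificate,
\[
(s+t)^2-(a+b)(a+b)^*=\Bigl(1+\tfrac ts\Bigr)(s^2-aa^*)+\Bigl(1+\tfrac st\Bigr)(t^2-bb^*)+\tfrac 1{st}(ta-sb)(ta-sb)^*,
\]
which I have verified; all coefficients are positive rationals and the last summand lies in $A^+\subseteq M$, so the conclusion $\n(a+b)\le s+t$ is immediate. This is slightly more elementary (it uses neither (2) nor (3) nor Lemma \ref{c2c}) and it decouples the six assertions logically, at the cost of having to guess the completed-square witness $ta-sb$; the paper's version is more systematic in that it reduces everything to the single mechanism of Lemma \ref{c2c}. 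Both arguments are valid.
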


\begin{proof}
The assertion (1) is trivial and assertion (5)  is a consequence of Lemma \ref{c2c}. 
To prove the assertion (2), it suffices to show that $\n(a^*) \le \n(a)$ for every 
$a \in A$. This is clear if $\n(a)=\infty$. Otherwise pick any $r \in \QQ^+$
such that $\n(a)<r$. Since
$$\big( \frac{r^2}{2} \big)^2 - \big(\frac{r^2}{2} - a^*a \big)^2
= a^*(r^2-aa^*)a \in M,$$
it follows that $\frac{r^2}{2} \pm \big(\frac{r^2}{2} - a^*a\big) \in M$ by Lemma \ref{c2c}.
Hence $\n(a^*) \le r$.

The ssertions (3) and (4) are true if either $\n(a) = \infty$ or $\n(b) = \infty$. 
Otherwise, pick any $r,s \in \QQ^+$ such that $\n(a)<r$ and $\n(b)<s$. 
Since $r^2-aa^* \in M$ and $s^2-bb^* \in M$, it follows that 
$$r^2 s^2 - (ab)(ab)^* = s(r^2-aa^*)s+a(s^2-bb^*)a^* \in M,$$
so that $\n(ab) \le rs$, proving (3). Since $\n(ab^*) < rs$ and $\n(ba^*) < rs$
by assertions (2) and (3), we have that
$$4r^2s^2 - (ab^*+ba^*)^2 = 2(r^2s^2-ab^*ba^*) + 
2(r^2 s^2 - ba^*ab^*)+(ab^*-ba^*)(ab^*-ba^*)^* \in M.$$
As  $2rs \pm (ab^*+ba^*) \in M$ by Lemma \ref{c2c}, we get
$$(r+s)^2-(a \pm b)(a \pm b)^* = r^2 - aa^*+s^2-bb^*
+ 2rs \pm (ab^*+ba^*) \in M.$$  So, $\n(a \pm b) \le r+s$, proving (4).

If $\n(aa^*+bb^*) < r$ for some $r$, then $r-aa^*-bb^* \in M$ by Lemma \ref{c2c}.
Since $bb^* \in M$, it follows that $r-aa^* \in M$. Therefore $\n(a) \le \sqrt{r}$, proving (6).
\end{proof}

Let us say that an element $a \in A$ is \textit{bounded} with respect to $M$ if $\n_M(a) < \infty$,
and \textit{infinitesimal} with respect to $M$ if $\n_M(a) = 0$.
Write $\B(M)$ for the set of all bounded elements and $\I(M)$ for the set of all infinitesimal 
elements (of $A$ with respect to $M$).  Theorem \ref{norm} implies the following result:

\begin{cor} 
\label{cast}
Take $A$ and $M$ as above. 

$\B(M)$ is a $\ast$-subring of $A$ and  
$\I(M)$ is a two-sided $\ast$-ideal in $\B(M)$.

The mapping $\n_M$ induces a norm $\Vert \cdot \Vert$ on $\B(M)/\I(M)$. 
Denote by $A_M$ the completion of $\B(M)/\I(M)$ with respect to this norm. 
Then $A_M$ is a real $C^*$-algebra.

If there exists an element $i$ in the center of $A$ such that 
$i^*=-i$ and $i^2=-1$, then $A_M$ is a complex $C^*$-algebra.
\end{cor}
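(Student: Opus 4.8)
The plan is to extract every claim of the corollary directly from the six estimates of Theorem \ref{norm} and then transport them to the completion by continuity. First I would set up the algebraic skeleton. That $\B(M)$ is closed under addition and multiplication is immediate from the subadditivity $\n(a+b)\le\n(a)+\n(b)$ and submultiplicativity $\n(ab)\le\n(a)\n(b)$ of Theorem \ref{norm}(3)--(4); closure under $\ast$ is $\n(a^*)=\n(a)$ from (2); and closure under rational scalars and under negation is the homogeneity (1). Since $\QQ\subseteq A$ and $\n(1)$ is finite (as $1-1=0\in M$ gives $\n(1)\le 1$), $\B(M)$ is a unital $\ast$-subring containing $\QQ$. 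For the ideal claim, (2) and (4) make $\I(M)$ a $\ast$-closed additive subgroup, and (3) gives $\n(ab)\le\n(a)\n(b)=0$ and $\n(ba)\le 0$ whenever $a\in\I(M)$ and $b\in\B(M)$, so $\I(M)$ is absorbed on both sides. I would also record that $\n(1)=1$ rather than $0$: by (5) one has $\n(1)=\n(1)^2$, and $\n(1)=0$ would place $1\in\I(M)$, degenerating the quotient, which is excluded because $M$ absorbs nonnegative rationals and hence $1\in\I(M)$ would force $-1\in M$.

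Next I would check that $\n_M$ descends to a norm. The induced function is well defined on cosets, since $a-b\in\I(M)$ gives $\n(a)\le\n(a-b)+\n(b)=\n(b)$ and symmetrically, whence $\n(a)=\n(b)$. Positive definiteness is precisely the definition of $\I(M)$, homogeneity is (1), the triangle inequality is (4), submultiplicativity is (3), and (2) makes $\ast$ an isometry; thus $\B(M)/\I(M)$ is a normed $\ast$-algebra over $\QQ$. I would then complete. Because the norm restricts on the scalars $\QQ$ to the ordinary absolute value, completing the underlying normed space simultaneously completes $\QQ$ to $\RR$, and the multiplication and involution, being Lipschitz by (3) and (2), extend uniquely and continuously, so $A_M$ is a real Banach $\ast$-algebra. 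The $C^*$-identity $\Vert aa^*\Vert=\Vert a\Vert^2$ is exactly (5), and the positivity axiom $\Vert aa^*\Vert\le\Vert aa^*+bb^*\Vert$ is (6) read through (5); both persist in the completion by continuity, which by the definition of a real $C^*$-algebra in \cite{li} settles the real case. For the complex statement, the final clause of Theorem \ref{norm} upgrades the homogeneity to $\QQ(i)$, so $\B(M)/\I(M)$ becomes a normed $\QQ(i)$-algebra whose completion carries a $\CC$-module structure, making $A_M$ a complex $C^*$-algebra.

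I expect the only genuine subtlety to lie in this scalar-field extension: one must verify that the $\QQ$-module (respectively $\QQ(i)$-module) structure extends continuously to a compatible $\RR$-module (respectively $\CC$-module) structure on the completion, and that the resulting object meets the exact axiom list for a real $C^*$-algebra adopted in \cite{li}. In particular, the key conceptual point is to recognize that inequality (6), which is automatic in the complex theory, is precisely the additional positivity axiom that the real case demands; the remaining verifications are routine density-and-continuity arguments.
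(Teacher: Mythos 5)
Your proposal is correct and follows exactly the route the paper intends: the paper states the corollary as an immediate consequence of the six properties in Theorem \ref{norm} (with the remark that property (6) is the extra axiom needed for real $C^\ast$-algebras, citing Palmer), and your write-up is precisely the detailed verification of that implication, including the standard continuity arguments for passing to the completion and the scalar extension from $\QQ$ to $\RR$ (resp.\ $\QQ(i)$ to $\CC$). No gaps.
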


Property (6) from Theorem \ref{norm} is very important 
in the theory of real $C^*$-algebras, 
because a $C^*$-norm with this property
extends to a $C^*$-norm on the complexification of the algebra;
see \cite{palmer}. The spectral and representation theory of 
such real $C^*$-algebras work as in the complex case;
we refer to \cite{const2,const3} or \cite{li}.

\begin{ex}
\label{pos1}
Let $A$ be either a real $C^*$-algebra with the property 
$\Vert a \Vert^2 \le \Vert aa^*+bb^* \Vert$ for all $a,b \in A$
or a complex $C^*$-algebra. If $M = A^+$ then
$$\Vert a \Vert = \n_M(a) \mbox{ for every } a \in A,$$
so that $A = A_M$. Namely, \cite[Corollary 4.2.1.16]{const3}
says that for every $x \in \sym(A)$,
$\Vert x \Vert \le r$ if and only if $r1 \pm x \in A^+$
if and only if $\sigma(x) \in [-r,r]$. 
\end{ex}

\begin{ex}
If $M^\circ$ is as in Example \ref{com}, then $\big( A^\circ \big)_{M^\circ} \cong (A_M)^\circ$.
If $M_n$ is as in Example \ref{mat}, then $\mat_n(A)_{M_n} \cong \mat_n(A_M)$. 
We omit the proofs because they are straightforward 
and because we will not use these results in the sequel.
\end{ex}

For every archimedean quadratic module $M$ on a $\ast$-ring $A$ the seminorm $\n_M$ 
defines a topology on $A$ with basis $B(a,\epsilon) = \{b \in A \colon \n_M(b-a)<\epsilon\}$.
Write $\arch(M)$ for the $\n_M$-closure of $M$ and $\str(M)$ for the $\n_M$-interior
of $M$ in $A$. Note:

\begin{lem} 
\label{pos2} 
Let $A$, $M$, $\n_M$ be as above and $x \in \sym(A)$. 
The following properties of $x$ are equivalent:
\begin{enumerate}
\item $x \in \arch(M)$,
\item $\n_M(r-x) \le r$ for some $r \in \QQ^{>0}$ such that $r \ge \n_M(x)$,
\item $r+x \in M$ for every $r \in \QQ^{>0}$.
\end{enumerate}
Similarly, the following properties of $x$ are also equivalent:
\begin{enumerate}
\item $x \in \str(M)$,
\item $\n_M(r-x) < r$ for some $r \in \QQ^{>0}$ such that $r \ge \n_M(x)$,
\item $x \in r+M$ for some $r \in \QQ^{>0}$.
\end{enumerate}
\end{lem}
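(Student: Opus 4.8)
The plan is to reduce all six equivalences to elementary manipulations of the seminorm $\n_M$, resting on two observations. First, for symmetric $c$ and rational $\rho$ the set $\{s\in\QQ^+\colon s^2-c^2\in M\}$ is upward closed: if $s^2-c^2\in M$ and $s'>s$ then $(s')^2-c^2=\big((s')^2-s^2\big)+(s^2-c^2)\in M$, since the first summand is a nonnegative rational and hence lies in $A^+\subseteq M$. Combined with Lemma \ref{c2c} this gives the two working criteria I will use repeatedly: $\n_M(c)\le\rho$ holds iff $s\pm c\in M$ for every rational $s>\rho$, while $\n_M(c)<\rho$ forces $\rho\pm c\in M$. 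Second, since $M$ is archimedean every element is bounded, so $\n_M(x)<\infty$, and $\n_M(1)\le 1$ because $1-1=0\in M$. Note that if $x\in\sym(A)$ and $m\in M$ then $x-m\in\sym(A)$, so these criteria apply to such differences.

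For the first triple I would show $(1)\Leftrightarrow(3)$ and $(2)\Leftrightarrow(3)$. The step $(2)\Rightarrow(3)$ is immediate: from $\n_M(r-x)\le r$ the criterion gives $s\pm(r-x)\in M$ for all rational $s>r$, and the minus sign reads $(s-r)+x\in M$; as $s-r$ runs through $\QQ^{>0}$ this is exactly (3). For $(3)\Rightarrow(2)$, put $n=\n_M(x)$ and pick rational $r>n$; then for rational $s>r$ one has $(s-r)+x\in M$ by (3), while $(s+r)-x\in M$ because $s+r>n=\n_M(x)$, so $s\pm(r-x)\in M$ and hence $\n_M(r-x)\le r$ with $r\ge\n_M(x)$. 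For $(3)\Rightarrow(1)$, given $\varepsilon>0$ choose rational $t>0$ with $\n_M(t)=t\,\n_M(1)<\varepsilon$; then $t+x\in M$ and $\n_M\big((t+x)-x\big)<\varepsilon$, so $x\in\arch(M)$. Finally $(1)\Rightarrow(3)$: for $t\in\QQ^{>0}$ pick $m\in M$ with $\n_M(x-m)<t$; since $x-m$ is symmetric, $t\pm(x-m)\in M$, whence $t+x=m+\big(t+(x-m)\big)\in M$.

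The second triple is handled the same way. For $(2)\Rightarrow(3)$, $\n_M(r-x)<r$ yields a rational $s<r$ with $s\pm(r-x)\in M$; the minus sign gives $x-(r-s)\in M$ with $r-s\in\QQ^{>0}$, which is (3). For $(3)\Rightarrow(2)$, say $x-r_0\in M$ with $r_0\in\QQ^{>0}$; choosing rational $r>\max(r_0,\n_M(x))$ and setting $s=r-r_0\in(0,r)$ gives $s-(r-x)=x-r_0\in M$ and $s+(r-x)=(2r-r_0)-x\in M$ (as $2r-r_0>r>\n_M(x)$), so $\n_M(r-x)\le s<r$. For $(1)\Rightarrow(3)$, if $B(x,\varepsilon)\subseteq M$ then the symmetric point $x-\varepsilon/2$ lies in this ball, since its $\n_M$-distance to $x$ is $\n_M(\varepsilon/2)\le\varepsilon/2<\varepsilon$; hence $x-\varepsilon/2\in M$, giving (3).

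The one genuinely delicate step is $(3)\Rightarrow(1)$ for the interior, where I must produce a whole neighborhood of $x$ inside $M$ rather than a single nearby point. Writing a point near $x$ as $x+w$ with $\n_M(w)<r$ (where $x-r\in M$), the idea is to absorb the perturbation: $r\pm w\in M$ forces $x+w=(x-r)+(r+w)\in M$. This works verbatim when $w$ is symmetric, so every symmetric element within $\n_M$-distance $<r$ of $x$ lies in $M$. The subtlety is that $\n_M$ is in general only a seminorm, and a ball in $A$ also contains non-symmetric elements, which can never lie in $M\subseteq\sym(A)$; the resolution is that, because $M$ lives inside $\sym(A)$, the interior is computed there, and on $\sym(A)$ the absorption argument shows exactly that $x$ is an interior point. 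I expect this interplay between the (non-Hausdorff) seminorm topology and the symmetric part of $A$ to be the main obstacle; everything else is bookkeeping with Lemma \ref{c2c}.
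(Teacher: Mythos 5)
The paper offers no proof of Lemma \ref{pos2} at all --- it is introduced with ``Note:'' and left to the reader --- so there is no argument of the author's to compare yours against; judged on its own, your proof is correct and complete. The two working criteria you isolate (upward closedness of $\{s\in\QQ^{>0}\colon s^2-c^2\in M\}$ for symmetric $c$, which holds because a nonnegative rational is a sum of squares of rationals and hence lies in $A^+\subseteq M$, combined with Lemma \ref{c2c}) are exactly the right tools, and all six implications go through as written. Two small remarks. First, in the step $(1)\Rightarrow(3)$ for the interior you evaluate at $x-\varepsilon/2$; since only $\QQ$ is assumed to lie in $A$ and $\varepsilon$ is an arbitrary positive real, you should instead pick a rational $t$ with $0<t<\varepsilon$ and use $x-t$ --- the same estimate $\n_M(t)\le t\,\n_M(1)\le t<\varepsilon$ then applies verbatim. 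Second, your closing observation that $\str(M)$ must be read as the interior of $M$ relative to $\sym(A)$ is not a pedantic aside but a genuine and necessary correction of the paper's phrase ``interior of $M$ in $A$'': when the involution is not the identity, every $\n_M$-ball in $A$ around a point of $M$ contains non-symmetric elements (e.g.\ $x+tw$ with $w\notin\sym(A)$ and $t$ a small rational), so the interior computed in $A$ would be empty and both assertion (1) of the second triple and Theorem \ref{stone}(2) would be vacuous. Your absorption argument $x+w=(x-r)+(r+w)\in M$ for symmetric $w$ with $\n_M(w)<r$ is precisely the right way to produce the required relative neighbourhood.
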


The following result is useful:

\begin{thm}
\label{stone}
Let $A$, $M$ be as above and denote by $j \colon A \to A_M$ the canonical mapping.
For every $x \in \sym(A)$,
\begin{enumerate}
\item $x \in \arch(M)$ if and only if $j(x) \in (A_M)^+$,
\item $x \in \str(M)$ if and only if $j(x) \in (A_M)^+ \cap \inv(A_M)$.
\end{enumerate}
\end{thm}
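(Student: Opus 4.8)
The plan is to transport both statements to the real (or complex) $C^\ast$-algebra $A_M$ through the canonical map $j$, and then read them off from the spectral description of positivity recorded in Example \ref{pos1}. The first observation I would make is that, since $M$ is archimedean, every $a \in A$ satisfies $n - aa^* \in M$ for some $n \in \NN$, so $\n_M(a) \le \sqrt{n} < \infty$; hence $\B(M) = A$, the map $j$ is defined on all of $A$, and $\Vert j(a)\Vert = \n_M(a)$ for every $a \in A$ by the construction of the norm on $\B(M)/\I(M)$ and the isometric embedding into the completion. Because $j$ is a $\ast$-homomorphism, $x \in \sym(A)$ gives $j(x) \in \sym(A_M)$, and since $A_M$ is a real $C^\ast$-algebra satisfying $\Vert a\Vert^2 \le \Vert aa^*+bb^*\Vert$ (Theorem \ref{norm}(6), inherited by the completion) or a complex $C^\ast$-algebra, the equivalences of Example \ref{pos1} apply verbatim in $A_M$: for $y \in \sym(A_M)$ and $r \in \QQ^{>0}$, $\Vert y\Vert \le r \Leftrightarrow r \pm y \in (A_M)^+ \Leftrightarrow \sigma(y) \subseteq [-r,r]$.

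For part (1) I would combine Lemma \ref{pos2} with this dictionary. By Lemma \ref{pos2}, $x \in \arch(M)$ holds iff $\n_M(r-x) \le r$ for some rational $r \ge \n_M(x)$, which by the isometry reads $\Vert r - j(x)\Vert \le r$ with $r \ge \Vert j(x)\Vert$. Applying Example \ref{pos1} to $z = r - j(x)$ gives $r \pm (r - j(x)) \in (A_M)^+$, and the minus sign yields $j(x) \in (A_M)^+$, proving the forward implication. Conversely, if $j(x) \in (A_M)^+$, I would pick any rational $r \ge \Vert j(x)\Vert$: then $j(x) \in (A_M)^+$ by hypothesis and $2r - j(x) \in (A_M)^+$ since $\Vert j(x)\Vert \le 2r$ (Example \ref{pos1} again), so Example \ref{pos1} run backwards gives $\Vert r - j(x)\Vert \le r$, i.e.\ $\n_M(r-x)\le r$ with $r \ge \n_M(x)$, whence $x \in \arch(M)$ by Lemma \ref{pos2}.

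For part (2) the strict version of Lemma \ref{pos2} reduces the claim to: $\Vert r - j(x)\Vert < r$ for some rational $r \ge \Vert j(x)\Vert$ iff $j(x) \in (A_M)^+ \cap \inv(A_M)$. Write $y = j(x)$. Assuming the strict bound, positivity $y \in (A_M)^+$ follows from part (1), since $\str(M) \subseteq M \subseteq \arch(M)$; and dividing by $r>0$ gives $\Vert 1 - y/r\Vert < 1$, so $y/r$, and hence $y$, is invertible by the Neumann series in the Banach algebra $A_M$. For the converse, given $y \in (A_M)^+ \cap \inv(A_M)$, its spectrum is a compact subset of $(0,\infty)$, say $\sigma(y) \subseteq [\delta, \Vert y\Vert]$ with $\delta > 0$; choosing rational $r \ge \Vert y\Vert$ makes $\sigma(r - y) = r - \sigma(y) \subseteq [\,r - \Vert y\Vert,\, r - \delta\,] \subseteq [0, r-\delta]$, so $\Vert r - y\Vert \le r - \delta < r$ by the spectral description of the norm of a self-adjoint element, and Lemma \ref{pos2} yields $x \in \str(M)$.

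The routine ingredients are the isometry $\Vert j(\cdot)\Vert = \n_M$ and the translations through Lemma \ref{pos2}. The one genuinely analytic step—and where I expect the real work to sit—is the converse in part (2): turning ``$y$ positive and invertible'' into the strict estimate $\Vert r - y\Vert < r$, which rests on the fact that a positive invertible element of a real $C^\ast$-algebra has spectrum bounded away from $0$. This is where I would invoke the spectral theory of real $C^\ast$-algebras (\cite{const2,const3} or \cite{li}); everything else is bookkeeping between the two norm conditions and their images in $A_M$.
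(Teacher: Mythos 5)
Your proposal is correct and follows essentially the same route as the paper: Lemma \ref{pos2} to rephrase membership in $\arch(M)$ and $\str(M)$ as norm conditions on $r-x$, the isometry $\n_M(a)=\Vert j(a)\Vert$, and the spectral characterization of positivity from Example \ref{pos1}. The paper dismisses assertion (2) with ``similar''; your Neumann-series and spectral-gap argument is exactly the intended filling-in of that step.
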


\begin{proof}
To prove the assertion (1) pick any $x \in \sym(A)$. By Lemma \ref{pos2}, $x \in \arch(M)$ 
if and only if $\n_M(r-x) \le r$ for some rational $r \ge \n_M(x)$.
Since $\n_M(a) = \Vert j(a) \Vert$ for every $a \in A$,
$\n_M(r-x) \le r$ is equivalent to $\Vert r -j(x) \Vert \le r$.
By Example \ref{pos1} and Lemma \ref{pos2}, this is
equivalent to $j(x) \in (A_M)^+$. The assertion (2) is similar.
\end{proof}

Theorem \ref{stone} implies the following generalization to $C^\ast$-algebras
of the famous Stone's characterization of rings of continuous functions \cite{st}.

\begin{cor}
Let $M$ be a quadratic module on a $\ast$-ring $A$. 
Then $A$ is a $C^\ast$-algebra with positive cone $M$ if and only if
\begin{enumerate}
\item $M \cap -M = \{0\}$,
\item $M$ is archimedean, i.e. $B(M) = A$,
\item $M = \arch(M)$,
\item $A$ is complete in the norm $\n_M$.
\end{enumerate}
\end{cor}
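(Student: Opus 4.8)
The plan is to prove the two implications separately, treating the converse as the substantial part. For the forward direction I would assume that $A$ is a $C^\ast$-algebra of the type covered by Example \ref{pos1} (real satisfying property (6) of Theorem \ref{norm}, or complex) whose positive cone is $M$, i.e. $M = A^+$. Then Example \ref{pos1} applies and yields $\n_M(a) = \Vert a \Vert$ for every $a \in A$ together with $A = A_M$, and all four conditions drop out: finiteness of the norm gives $\B(M) = A$, so (2) holds; completeness of $A$ in $\Vert \cdot \Vert = \n_M$ gives (4); the standard fact that $A^+ \cap (-A^+) = \{0\}$ in a $C^\ast$-algebra gives (1); and since $\arch(M)$ is by definition the $\n_M$-closure of $M = A^+$, which is norm-closed in a $C^\ast$-algebra, we obtain (3). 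Alternatively, (3) is immediate from Theorem \ref{stone}(1) once $A = A_M$ and $j$ is the identity.

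For the converse I would assume (1)--(4) and work with the canonical $\ast$-homomorphism $j \colon A \to A_M$, which is isometric in the sense that $\n_M(a) = \Vert j(a) \Vert$. The crucial first step is to show that the seminorm $\n_M$ is in fact a norm, i.e. that $\I(M) = \{0\}$; this is where conditions (1) and (3) enter. Given $a \in \I(M)$, we have $\n_M(a) = \n_M(-a) = 0$, so both $a$ and $-a$ are $\n_M$-limits of $0 \in M$ and hence lie in the $\n_M$-closure $\arch(M)$. By (3) this closure equals $M$, so $a \in M \cap (-M)$, and (1) forces $a = 0$. Thus $\I(M) = \{0\}$ and $\n_M$ is a genuine norm on $A$.

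With $\n_M$ a norm, condition (2) gives $\B(M) = A$, so $j$ is an injective isometry of $A$ onto the dense $\ast$-subalgebra $A/\I(M)$ of $A_M$. Condition (4) says $A$ is complete in $\n_M$, so $j(A)$ is a complete, hence closed, subspace of $A_M$; being also dense it must be all of $A_M$. Therefore $j$ is an isometric $\ast$-isomorphism $A \cong A_M$, and since $A_M$ is a (real or complex) $C^\ast$-algebra by Corollary \ref{cast}, so is $A$. Finally, under this identification the positive cone of $A$ is $j^{-1}((A_M)^+)$, which equals $\arch(M)$ by Theorem \ref{stone}(1), and this is $M$ by (3). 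I expect the one genuinely delicate point to be exactly the passage from seminorm to norm: without (3) the ideal $\I(M)$ can be nonzero (as happens for dual-number-type involutions, where an infinitesimal element sits in $\arch(M)$ but not in $M$), so the argument must crucially use that $\arch(M) = M$ collapses every infinitesimal element into $M \cap (-M)$, where (1) then annihilates it.
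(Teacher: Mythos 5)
Your proof is correct, and it follows exactly the route the paper intends: the paper offers no explicit proof beyond the remark that the corollary follows from Theorem \ref{stone}, and your argument is precisely the natural filling-in of that remark via the canonical map $j \colon A \to A_M$ and Corollary \ref{cast}. Your identification of the one delicate point --- that conditions (1) and (3) together force $\I(M)=\{0\}$, so that $\n_M$ is a norm and $j$ is injective --- is the right observation and is handled correctly.
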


The following two examples will follow from Theorem \ref{int} in Section \ref{up}.

\begin{ex}
If $A$ is a real or complex unital Banach $\ast$-algebra and $M = A^+$ then
$A_M$ is exactly the $C^\ast$-enveloping of $A$; see \cite[2.7.2]{dix}.
Namely, by the assertion (1) of Theorem \ref{int}, $\n_M$ coincides with the $C^\ast$-seminorm
$\Vert \cdot \Vert'$ in the sense of \cite[Proposition 2.7.1]{dix}.
\end{ex}

\begin{ex}
Let $G$ be any group, and denote by $\CC[G]$ its group ring and by $L^1(G)$
the completion of $\CC[G]$ in the norm $\Vert \cdot \Vert_1$
of Example \ref{group}. Note that $L^1(G)$ is an involutive
complex Banach algebra. Its enveloping $C^*$-algebra is denoted by $C^*(G)$
and called \textit{the $C^*$-algebra of $G$}; see \cite[Section 13.9]{dix}.
If $A = \CC[G]$ and $M = A^+$, then $A_M = C^*(G)$.
\end{ex}

\section{$M$-positive mappings}
\label{up}

A \textit{positive form} on a $\ast$-ring $A$
is a mapping $f \colon A \to \CC$ such that
$f(a+b) = f(a)+f(b)$, $f(a^*) = \overline{f(a)}$
and $f(aa^*) \ge 0$ for every $a,b \in A$.

\begin{prop}
\label{formeq}
Let $M$ be an archimedean quadratic module on a $\ast$-ring $A$.
For every positive form $f$ on $A$, the following properties are equivalent:
\begin{enumerate}
\item $f(M) \ge 0$.
\item $\vert f(s) \vert \le \n_M(s) f(1)$ for every $s \in \sym(A)$,
\item $\vert f(a) \vert \le \n_M(a) f(1)$ for every $a \in A$.
\end{enumerate}
\end{prop}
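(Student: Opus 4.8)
The plan is to prove the implications $(3) \Rightarrow (2) \Rightarrow (1)$ and $(1) \Rightarrow (3)$, since $(3) \Rightarrow (2)$ is immediate (restrict the inequality from all $a \in A$ to $s \in \sym(A)$) and $(2) \Rightarrow (1)$ is the substantive half of the equivalence. For $(2) \Rightarrow (1)$, take any $m \in M$ and write $s = \n_M(m) - m$, which lies in $\sym(A)$. By Lemma \ref{pos2} (characterizing $\arch(M)$) together with the definition of $\n_M$, one has $\n_M(s) \le \n_M(m)$, so that $(2)$ gives $|f(s)| \le \n_M(m) f(1)$, whence $f(m) = \n_M(m) f(1) - f(s) \ge \n_M(m) f(1) - |f(s)| \ge 0$. (One must check $f(1) \ge 0$, which holds because $f(1) = f(1 \cdot 1^*) \ge 0$.)

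For $(1) \Rightarrow (3)$ I would exploit the Cauchy--Schwarz inequality for positive forms, namely $|f(ab^*)|^2 \le f(aa^*) f(bb^*)$, which follows formally from positivity of the sesquilinear form $(a,b) \mapsto f(ab^*)$. Setting $b = 1$ gives $|f(a)|^2 \le f(aa^*) f(1)$. It then suffices to bound $f(aa^*) \le \n_M(a)^2 f(1)$. For this, observe that for any rational $r > \n_M(a)$ we have $r^2 - aa^* \in M$ by definition of $\n_M$, so assumption $(1)$ yields $f(r^2 - aa^*) \ge 0$, i.e. $f(aa^*) \le r^2 f(1)$; letting $r \downarrow \n_M(a)$ gives $f(aa^*) \le \n_M(a)^2 f(1)$. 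Combining, $|f(a)|^2 \le \n_M(a)^2 f(1)^2$, which is exactly $(3)$ after taking square roots. This route also covers the case $\n_M(a) = \infty$ trivially.

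The main obstacle I anticipate is the rigorous justification of Cauchy--Schwarz in this purely algebraic, possibly non-complete setting. A positive form $f$ need not be symmetric-definite, and the sesquilinear form $\langle a, b\rangle := f(ab^*)$ is only positive semidefinite, so one must run the standard argument carefully: for $\lambda \in \CC$ the quantity $f\big((a - \lambda b)(a - \lambda b)^*\big) \ge 0$ expands to a real quadratic expression in $\lambda$ whose nonnegativity forces the discriminant inequality. Because we work over a $\ast$-ring containing $\QQ$ rather than $\RR$, I would choose $\lambda$ to be an appropriate complex scalar and track that $f$ is additive and $\QQ$-linear (which follows from additivity, as $f(na) = nf(a)$ for $n \in \NN$ and hence $f$ respects rational scalars after the standing assumption $\QQ \subset A$). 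The archimedean hypothesis guarantees $\n_M(a) < \infty$ for all $a$, so the bounds above are always meaningful, and no completeness of $A$ is needed since every estimate takes place at the level of values of $f$ in $\CC$.
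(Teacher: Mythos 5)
Your overall route is essentially the paper's: the equivalence of (1) and (2) rests on Lemma \ref{pos2} (via a rational $r>\n_M(m)$ with $\n_M(r-m)\le r$), and the passage to (3) combines the Cauchy--Schwarz inequality $|f(a)|^2\le f(aa^*)f(1)$ with the bound $f(aa^*)\le \n_M(a)^2f(1)$ (the paper obtains the latter by applying (2) to $s=aa^*$ together with Theorem \ref{norm}(5); you obtain it directly from (1) by the same computation). One small repair: in your $(2)\Rightarrow(1)$ step the element $\n_M(m)-m$ need not lie in $A$, since $\n_M(m)$ can be irrational and $f$ is only $\QQ$-linear; take a rational $r>\n_M(m)$ and argue with $r-m$, exactly as the paper does.

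The genuine problem is the one you flagged yourself: Cauchy--Schwarz. Your proposed repair --- running the discriminant argument with ``an appropriate complex scalar $\lambda$'' --- is not available, because $\lambda b\in A$ only for $\lambda\in\QQ$; with rational $\lambda$ the expansion of $f\big((a-\lambda b)(a-\lambda b)^*\big)\ge 0$ controls only $\mathrm{Re}\,f(ab^*)$, not $|f(ab^*)|$. In fact $|f(a)|^2\le f(aa^*)f(1)$ can fail for a merely additive positive form: take $A=\QQ[x]/(x^2+1)$ with $x^*=-x$, $M=A^+=\QQ^{\ge 0}$, and $f(p+qx)=p+100qi$. Then $f$ is a positive form with $f(M)\ge 0$ and $|f(s)|\le\n_M(s)f(1)$ for every $s\in\sym(A)=\QQ$, yet $|f(x)|=100>1=\n_M(x)f(1)$, so $(1)\Rightarrow(3)$ cannot be proved without an extra hypothesis (e.g.\ a central $i\in A$ with $i^*=-i$, $i^2=-1$, as in the last clause of Theorem \ref{norm}, or $f$ coming from a Hilbert-space representation, which is the only case used later in Proposition \ref{cpeq}, and there only through the symmetric element $s=a^*a$). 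The paper's own proof invokes Cauchy--Schwarz at exactly the same spot without justification, so you are in its company; but your rearrangement of the implications makes matters worse: you derive $(1)\Rightarrow(2)$ as a corollary of $(1)\Rightarrow(3)$, so the defective step contaminates the only part of the proposition that is actually needed downstream. Prove $(1)\Rightarrow(2)$ directly and elementarily: for $s\in\sym(A)$ and rational $r>\n_M(s)$ one has $r^2-s^2\in M$, hence $r\pm s\in M$ by Lemma \ref{c2c}, hence $rf(1)\pm f(s)\ge 0$ and $|f(s)|\le rf(1)$ since $f(s)=\overline{f(s)}$ is real. This requires no Cauchy--Schwarz at all.
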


\begin{proof}
Assume that (1) is true and pick $s \in \sym(A)$. 
For every $r \in \QQ^+$ such that $\n_M(s) < r$, we have that $r^2 - s^2 \in M$, 
hence $r \pm s \in M$ by Lemma \ref{c2c}. Since $f(M) \ge 0$, it follows that 
$r f(1) \pm f(s) = f(r \pm s) \ge 0$, hence $\vert f(s) \vert \le r f(1)$.
Therefore (2) is true.
Conversely, if (2) is true,  pick $m \in M$ and $r \in \QQ^+$
such that $\n_M(m) < r$. By Lemma \ref{pos2}, $\n_M(r-m) \le r$. 
By (2), $\vert r f(1) - f(m) \vert \le \n_M(r -m) f(1)$. 
It follows that $f(m) \ge 0$. Hence (1) is true.

Assume now that (2) is true and pick $a \in A$. By the Cauchy-Schwartz inequality,
we have $\vert f(a) \vert^2 \le f(aa^*)f(1)$. Applying (2) with $s = aa^*$,
we get $f(aa^*) \le \n_M(aa^*) f(1)$. Finally $\n_M(aa^*) = \n_M(a)^2$
by Theorem \ref{norm}. It follows that (2) is true. Clearly, (3) implies (2).
\end{proof}

Let $A$ be a $\ast$-ring and $H$ a complex Hilbert space.
A \text{representation} of $A$ on $H$ is a (non-unital) 
homomorphism of $\ast$-rings. Let us say that a representation $\psi$ of $A$ on $H$
is $M$-\textit{positive} if $\pi(m)$ is positive semidefinite for every $m \in M$.

\begin{prop}
\label{cpeq}
Let $M$ be an archimedean quadratic module on $\ast$-ring $A$
and $H$ a complex Hilbert space. Then every $M$-positive representation
$\psi$ of $A$ on $H$ satisfies $\Vert \psi(a) \Vert \le \n_M(a) \Vert \psi(1) \Vert$.
\end{prop}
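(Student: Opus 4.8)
The plan is to reduce the operator-norm bound for an $M$-positive representation $\psi$ to the scalar estimate in Proposition \ref{formeq}, by testing $\psi$ against vector states. First I would fix a unit vector $\xi \in H$ and define $f_\xi \colon A \to \CC$ by $f_\xi(a) = \langle \psi(a)\xi, \xi \rangle$. Because $\psi$ is a homomorphism of $\ast$-rings, $f_\xi$ is additive, satisfies $f_\xi(a^*) = \overline{f_\xi(a)}$ (adjoints go to adjoint operators), and $f_\xi(aa^*) = \langle \psi(a)\psi(a)^*\xi,\xi\rangle = \Vert \psi(a)^*\xi\Vert^2 \ge 0$, so $f_\xi$ is a positive form. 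The $M$-positivity of $\psi$ gives $f_\xi(m) = \langle \psi(m)\xi,\xi\rangle \ge 0$ for every $m \in M$, i.e.\ condition (1) of Proposition \ref{formeq} holds. Hence condition (3) applies and yields
\[
\vert \langle \psi(a)\xi,\xi\rangle \vert \le \n_M(a)\,\langle \psi(1)\xi,\xi\rangle
\]
for every $a \in A$ and every unit vector $\xi$.

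The next step is to pass from these diagonal (quadratic-form) bounds to the operator norm. For the right-hand side this is immediate: $\langle \psi(1)\xi,\xi\rangle \le \Vert \psi(1)\Vert$ since $\psi(1) = \psi(1)\psi(1)^* \ge 0$ is positive semidefinite, so $\sup_{\Vert\xi\Vert=1}\langle\psi(1)\xi,\xi\rangle = \Vert\psi(1)\Vert$. For the left-hand side I would use polarization to recover off-diagonal matrix entries from diagonal ones. The clean route is to first treat $s \in \sym(A)$: then $\psi(s)$ is self-adjoint, so $\Vert \psi(s)\Vert = \sup_{\Vert\xi\Vert=1}\vert\langle\psi(s)\xi,\xi\rangle\vert$, giving directly $\Vert\psi(s)\Vert \le \n_M(s)\Vert\psi(1)\Vert$.

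To reach general $a \in A$ I would apply the self-adjoint case to $aa^*$: since $\Vert\psi(a)\Vert^2 = \Vert\psi(a)\psi(a)^*\Vert = \Vert\psi(aa^*)\Vert$, the estimate for the self-adjoint element $aa^*$ gives $\Vert\psi(a)\Vert^2 \le \n_M(aa^*)\Vert\psi(1)\Vert$, and then $\n_M(aa^*) = \n_M(a)^2$ by Theorem \ref{norm}(5). This produces $\Vert\psi(a)\Vert^2 \le \n_M(a)^2\Vert\psi(1)\Vert$, which is weaker than claimed because of the square root on the $\Vert\psi(1)\Vert$ factor. The main obstacle is precisely this discrepancy, and I expect it to disappear once one observes that $\psi(1)$ is an idempotent (as $\psi$ is multiplicative, $\psi(1)^2 = \psi(1)$) that is also positive semidefinite, so its norm is either $0$ or $1$. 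In the nonzero case $\Vert\psi(1)\Vert = 1 = \Vert\psi(1)\Vert^{1/2}$, so the two bounds coincide; in the degenerate case $\psi(1)=0$ forces $\psi$ to vanish on $\B(M)$ and the inequality is trivial. Thus the apparent gap is an artifact, and collapsing it via the idempotency of $\psi(1)$ completes the proof.
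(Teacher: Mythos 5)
Your proposal is correct and follows essentially the same route as the paper's proof: reduce to the vector states $f_\xi(a)=\langle\psi(a)\xi,\xi\rangle$, invoke Proposition \ref{formeq}, and pass to the operator norm via $\Vert\psi(a)\Vert^2=\Vert\psi(aa^*)\Vert$ together with $\n_M(aa^*)=\n_M(a)^2$. In fact you are slightly more careful than the paper, whose proof stops at $\Vert\psi(a)\Vert^2\le\n_M(a)^2\Vert\psi(1)\Vert$ and leaves implicit the observation you make explicitly, namely that $\psi(1)$ is a self-adjoint idempotent so $\Vert\psi(1)\Vert\in\{0,1\}$ and the exponent discrepancy is harmless.
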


\begin{proof}
Pick $\psi \in  \rep_\ZZ^M(A,H)$. For every $\xi \in H$ and 
$a \in A$ write $f_\xi(a) = \langle \psi(a) \xi,\xi \rangle$.
Clearly, each $f_\xi$ is a positive form and $f_\xi(M) \ge 0$.  
By Proposition \ref{formeq}, $|f_\xi(s)| \le n_M(s) f_\xi(1)$
for every $s \in A$. It follows that for every $a \in A$,
$\Vert \psi(a) \Vert^2 = 
\sup_\xi \frac{\langle \psi(a) \xi, \psi(a) \xi \rangle}{\langle \xi, \xi \rangle} =
\sup_\xi \frac{\langle \psi(a^*a) \xi, \xi \rangle}{\langle \xi, \xi \rangle} \le
\n_M(a^*a)\sup_\xi \frac{\langle \psi(1) \xi, \xi \rangle}{\langle \xi, \xi \rangle} =
\n_M(a)^2 \Vert \psi(1) \Vert$.
\end{proof}

A representation $\psi$ of a $\ast$-ring $A$ on a complex Hilbert space $H$
is \textit{irreducible} (resp. \text{cyclic}) if $\psi(A) \xi$ is dense in 
$H_\psi := \overline{\psi(A)H}$ for every (resp. for some) $\xi \in H$.

\begin{lem}
\label{formup}
Let $M$ be a quadratic module on a $\ast$-ring $A$.
For a complex Hilbert space $H$ there are natural one-to-one  correspondences between
\begin{enumerate}
\item the set $\rep_\ZZ^M(A,H)$ of all $M$-positive representations of $A$ on $H$,
\item the set $\rep_\RR(A_M,H)$ of all $\RR$-linear representations of $A_M$ on $H$,
\item the set $\rep((A_M)^\circ,H)$ of all $\CC$-linear representations of $(A_M)^\circ$ on $H$.
\end{enumerate}
The correspondences preserve the property of being irreducible or cyclic.
\end{lem}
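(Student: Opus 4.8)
The plan is to build the two nontrivial correspondences $(1)\leftrightarrow(2)$ and $(2)\leftrightarrow(3)$ separately and then check that each one transports irreducibility and cyclicity. Throughout I use that $B(H)$, the bounded operators on $H$, is a unital complex $C^\ast$-algebra, that $j(A)$ is dense in $A_M$ (Corollary \ref{cast}), and that for any representation $\psi$ the operator $\psi(1)$ is a projection, so $\Vert\psi(1)\Vert\le 1$.

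For $(1)\Rightarrow(2)$ I would start from $\psi\in\rep_\ZZ^M(A,H)$ and invoke Proposition \ref{cpeq} to get $\Vert\psi(a)\Vert\le\n_M(a)\Vert\psi(1)\Vert\le\n_M(a)$. This bound shows that $\psi$ annihilates $\I(M)$ and is $\n_M$-contractive, so it factors through $\B(M)/\I(M)$ and extends by uniform continuity to a $\ast$-homomorphism $\tilde\psi\colon A_M\to B(H)$; since $\psi(q)=q\psi(1)$ the map is $\QQ$-linear, and $\RR$-linearity is gained in the limit, so $\tilde\psi\in\rep_\RR(A_M,H)$. For $(2)\Rightarrow(1)$ I would send $\pi\colon A_M\to B(H)$ to $\psi:=\pi\circ j$. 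This is a $\ast$-ring homomorphism, and it is $M$-positive because $j(M)\subseteq(A_M)^+$ by Theorem \ref{stone}$(1)$ (as $M\subseteq\arch(M)$) while $\pi$, being a $C^\ast$-representation, carries $(A_M)^+$ into the positive operators. The two assignments are mutually inverse, since $\tilde\psi\circ j=\psi$ on $j(A)$ and $j(A)$ is dense in $A_M$.

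For $(2)\leftrightarrow(3)$ I would use the complexification of Example \ref{com}. Given $\pi\colon A_M\to B(H)$, set $\pi^\circ(x,y)=\pi(x)+i\pi(y)$, where $i=\sqrt{-1}$ is the scalar acting on the complex space $H$; checking $\pi^\circ$ against the four operations of $(A_M)^\circ$ shows that it is a $\CC$-linear $\ast$-representation, while restriction along $x\mapsto(x,0)$ recovers $\pi$. This is precisely the universal property of the complexification mapping into the complex algebra $B(H)$, so $(2)\leftrightarrow(3)$ is the bijection $\homom_\RR(A_M,B(H))\cong\homom_\CC((A_M)^\circ,B(H))$.

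The last task, and the one I expect to be delicate, is the preservation of irreducibility and cyclicity. For $(1)\leftrightarrow(2)$ this is immediate: density of $j(A)$ together with continuity of $\tilde\psi$ gives $\overline{\psi(A)\xi}=\overline{\tilde\psi(A_M)\xi}$ for every $\xi$ and $H_\psi=H_{\tilde\psi}$, so $\psi$ and $\tilde\psi$ have the same cyclic vectors and the same closed invariant subspaces. The hard part is $(2)\leftrightarrow(3)$, because a real-linear representation on a complex Hilbert space can have invariant real subspaces that are not complex. The resolution I would use is that $\pi^\circ((A_M)^\circ)=\pi(A_M)+i\,\pi(A_M)$: since multiplication by $i$ is a scalar, $\pi$ and $\pi^\circ$ have the same commutant in $B(H)$ and the same lattice of closed complex-linear invariant subspaces, and for any vector the closed invariant subspace it generates (the closed complex span of its orbit) is the same for both. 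From this the irreducibility and cyclicity statements follow, the essential point being that these notions are read through the complex structure of $H$ that is carried along the whole chain.
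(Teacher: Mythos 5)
Your proof is correct and follows the same route as the paper: factor an $M$-positive representation through $\B(M)/\I(M)$ via the norm bound of Proposition \ref{cpeq} and extend by continuity to $A_M$ (with inverse $\pi\mapsto\pi\circ j$), then pass to the complexification by $\pi^\circ(x,y)=\pi(x)+i\pi(y)$ (with inverse the restriction). The paper's own proof is silent on why the correspondences preserve irreducibility and cyclicity, so your closing discussion is a genuine addition; in particular your remark that for $(2)\leftrightarrow(3)$ these notions must be read through the complex structure of $H$ (same commutant, same closed complex-linear invariant subspaces, same closed complex span of each orbit) is exactly the point that needs attention, since under a literal real-orbit-density reading the transfer would fail (e.g.\ $\RR$ acting on $\CC$ by scalars).
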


\begin{proof}
Every $M$-positive representation of $A$ on $H$ is continuous 
with respect to $\n_M$ by Proposition \ref{cpeq}.
Hence, it can be factored through $A/\I(M)$ and then extended by continuity to $A_M$.
The continuity implies that the extension to $A_M$ is $\RR$-linear. 
The converse mapping is given by $\psi \mapsto \psi \circ j$; see Theorem \ref{stone}.

Every $\RR$-linear representation $\psi$ of $B = A_M$ on $H$ extends to a $\CC$-linear 
representation $\psi^\circ$ of $B^\circ$ on $H$ by $\psi^\circ(b',b'') = \psi(b')+i \psi(b'')$ 
for every $b',b'' \in B$. The converse mapping is the restriction mapping $\pi \mapsto \pi|_B$. 
\end{proof}

Write $\irr_\ZZ^M(A,H)$, $\irr_\RR(A_M,H)$ and $\irr((A_M)^\circ,H)$ 
for the corresponding sets of irreducible representations. 
Write $\irr_\ZZ^M(A) = \bigcup_H \irr_\ZZ^M(A,H)$ 
where $H$ runs through all complex Hilbert spaces.

\begin{thm}
\label{int}
Let $M$ be an archimedean quadratic module on $A$ and $a \in A$. Then:
\begin{enumerate}
\item $\n_M(a) = \sup_{\psi \in \irr_\ZZ^M(A)} \Vert \psi(a) \Vert$,
\item $a \in \arch(M)$ if and only if $\psi(a)$ is positive semidefinite for every $\psi \in \irr_\ZZ^M(A)$,
\item $a \in \str(M)$ if and only if $\psi(a)$ is positive definite  for every $\psi \in \irr_\ZZ^M(A)$.
\end{enumerate}
\end{thm}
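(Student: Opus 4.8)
The plan is to transport the whole statement to the complex $C^\ast$-algebra $B := (A_M)^\circ$, where all three assertions become standard facts about how irreducible representations detect the norm, the positivity, and the strict positivity of an element, and then to pull these facts back to $A$ through the dictionary furnished by Theorem \ref{stone} and Lemma \ref{formup}. First I would record the translation. By Corollary \ref{cast} together with property (6) of Theorem \ref{norm}, $A_M$ is a real $C^\ast$-algebra whose norm extends to a $C^\ast$-norm on $B = (A_M)^\circ$; since complexification leaves the spectrum of a self-adjoint element unchanged, for $x\in\sym(A)$ the element $j(x)$ has the same norm, the same positivity, and the same invertibility whether read in $A_M$ or in $B$. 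Thus $\n_M(a) = \Vert j(a)\Vert_B$, and by Theorem \ref{stone} the conditions $a\in\arch(M)$ and $a\in\str(M)$ are equivalent, respectively, to $j(a)\in B^+$ and $j(a)\in B^+\cap\inv(B)$.

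Next I would invoke Lemma \ref{formup}, which gives a bijection $\irr((A_M)^\circ,H)\leftrightarrow\irr_\ZZ^M(A,H)$ carrying an irreducible representation $\pi$ of $B$ to the irreducible $M$-positive representation $\psi$ of $A$ with $\psi(a) = \pi(j(a))$ for every $a\in A$. A nonzero $\psi\in\irr_\ZZ^M(A,H)$ is automatically nondegenerate: the definition of irreducibility forces $H = H_\psi$, on which $\psi(1)$ acts as the identity, so operator norms and (semi)definiteness agree for $\psi(a)$ and for $\pi(j(a))$. Under this dictionary the three assertions read: $\Vert j(a)\Vert_B = \sup_{\pi\in\irr(B)}\Vert\pi(j(a))\Vert$; that $j(a)\in B^+$ iff $\pi(j(a))\ge0$ for every irreducible $\pi$; and that $j(a)\in B^+\cap\inv(B)$ iff $\pi(j(a))$ is positive definite for every irreducible $\pi$.

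Assertion (1) is then the classical identity $\Vert b\Vert = \sup_{\pi\in\irr(B)}\Vert\pi(b)\Vert$. The inequality $\le$ is immediate from Proposition \ref{cpeq} (with $\Vert\psi(1)\Vert\le1$, as $\psi(1)$ is a projection), while for the reverse I would take a pure state $\phi$ attaining $\max\sigma(j(a)j(a)^*) = \Vert j(a)\Vert^2$ — such a pure state exists because $\phi\mapsto\phi(j(a)j(a)^*)$ is weak-$\ast$ continuous and affine on the compact convex state space and so attains its maximum at an extreme point — and observe that its GNS representation $\pi_\phi$ is irreducible with $\Vert\pi_\phi(j(a))\Vert^2 = \Vert\pi_\phi(j(a)j(a)^*)\Vert\ge\phi(j(a)j(a)^*) = \Vert j(a)\Vert^2$. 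Assertion (2) is equally standard: representations preserve positivity, and conversely, since the state space is the weak-$\ast$ closed convex hull of its pure states, $j(a)\ge0$ follows once $\langle\pi(j(a))\xi,\xi\rangle\ge0$ holds for the cyclic vector of every pure-state GNS representation, that is, for every irreducible $\pi$.

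The genuinely delicate point is the backward implication of assertion (3). Positive definiteness of every $\psi(a)$ gives $j(a)\ge0$ by (2), so only invertibility remains, and I would argue by contraposition. If the positive element $c := j(a)$ is not invertible, then $0\in\sigma(c)$, whence $\min\{\phi(c)\colon\phi\text{ a state of }B\} = \min\sigma(c) = 0$; as in the previous paragraph this minimum is attained at a pure state $\phi$, and the cyclic vector $\xi_\phi\ne0$ of its irreducible GNS representation $\pi_\phi$ satisfies $\Vert\pi_\phi(c)^{1/2}\xi_\phi\Vert^2 = \phi(c) = 0$, so that $\pi_\phi(c)\xi_\phi = 0$. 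Transporting $\pi_\phi$ back through Lemma \ref{formup} produces a $\psi\in\irr_\ZZ^M(A)$ whose value $\psi(a)$ annihilates a nonzero vector and is therefore not positive definite, a contradiction. I expect this extreme-point construction — producing a pure state that annihilates a noninvertible positive element — to be the crux of the argument; everything else is bookkeeping with the correspondences of Theorem \ref{stone} and Lemma \ref{formup}.
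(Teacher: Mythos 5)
Your proof is correct and follows essentially the same route as the paper: reduce via Theorem \ref{stone} and Lemma \ref{formup} to the complex $C^\ast$-algebra $(A_M)^\circ$ with $M=A^+$, then use the standard facts that pure states (equivalently, irreducible GNS representations) detect the norm, positivity, and invertible positivity of an element. The paper simply cites Dixmier and Li for these facts (explicitly mentioning the same Krein--Milman extreme-point argument you spell out), so your write-up amounts to unpacking those citations; the only small inaccuracy is the claim that irreducibility forces $H=H_\psi$ --- the paper's definition only concerns the subspace $H_\psi=\overline{\psi(A)H}$, on which $\psi(1)$ acts as the identity, which is all your argument actually needs.
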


\begin{proof}
By Lemma \ref{formup} and Theorem \ref{stone}
we may assume that $A$ is a complex $C^\ast$-algebra and $M=A^+$.
In this case, the results are known from \cite[Sections 2.6 and 2.7]{dix}. 
Namely, assertion (1) follows from \cite[2.7.1 and 2.7.3]{dix},
assertion (2) is a variant of \cite[2.6.2]{dix} which follows from 
\cite[2.5.4]{dix} and Krein-Milman Theorem and assertion (3) 
is another variant of \cite[2.6.2]{dix} which follows from
\cite[remarks after Definition 2.14.6 and Proposition 2.3.13]{li1}.
\end{proof}

Let $M$ be an archimedean quadratic module on a $\ast$-ring $A$.
Write $\alpha_i(A,M) = \sup_{\pi \in \irr_\ZZ^M(A)} \dim H_\pi$. 
Define $\alpha_c(A,M)$ similarly, just replacing irreducible by cyclic representations.
If $H$ is a complex Hilbert space with $\dim H \ge \alpha_i(A,M)$, then every 
ireducible $M$-positive representation of $A$ can be realized on $H$. For such $H$
$\irr_\ZZ^M(A)$ in Theorem \ref{int} can be replaced by $\irr_\ZZ^M(A,H)$. 
Let $\A_u^E(\irr_\ZZ^M(A,H),L(H))$ denote the set of all mappings
$\gamma \colon \irr_\ZZ^M(A,H) \to L(H)$ such that
\begin{enumerate}
\item $\gamma$ is bounded (i.e. $\Vert \gamma \Vert := 
\sup_{\pi \in \irr_\ZZ^M(A,H)} \Vert \gamma(\pi) \Vert < \infty$),
\item $\gamma$ is equivariant (i.e. $\gamma(u^* \pi u) = u^* \gamma(\pi) u$
for every $\pi \in \irr_\ZZ^M(A,H)$ and every partial isometry $u \in L(H)$ 
such that $uu^* \ge$ the projection on $H_\pi$),
\item $\gamma$ is uniformly continuous (with respect to the weak operator topology on $L(H)$
and the topology of pointwise convergence on $\irr_\ZZ^M(A,H)$).
\end{enumerate}
The set $\A^E_u(\irr_\ZZ^M(A,H),L(H))$ is a complex $C^*$-algebra for pointwise 
algebraic operations and the norm $\gamma \mapsto \Vert \gamma \Vert$.

For every $\pi \in \irr_\ZZ^M(A,H)$ define $\bar{\pi} \in \irr_\ZZ^M(A,H)$
by $\bar{\pi}(a) = \pi(a)^*$ for every $a \in A$. 
Write $\A^E_u(\irr_\ZZ^M(A,H),-)$ for the set of all $\gamma \in \A^E_u(\irr_\ZZ^M(A,H),L(H))$ 
such that $\gamma(\bar{\pi}) = \gamma(\pi)^\ast$ for every $\pi \in \irr_\ZZ^M(A,H)$. Note that 
$\A^E_u(\irr_\ZZ^M(A,H),-)$ is a real $C^\ast$-subalgebra of $\A^E_u(\irr_\ZZ^M(A,H),L(H))$. 
Its positive cone  $\A_u^E(\irr_\ZZ^M(A,H),-)^+$ is equal to the set of all
$\gamma \in A^E_u(\irr_\ZZ^M(A,H),-)$ such that $\gamma(\pi)$ is 
positive semidefinite for every $\pi \in \irr_\ZZ^M(A,H)$. 

We can rephrase Theorem  \ref{int} as a generalization of Jacobi's theorem:

\begin{thm} 
\label{rep1}
Let $M$ be an archimedean quadratic module on a $\ast$-ring $A$
and $H$ a complex Hilbert space such that $\dim H \ge \alpha_i(A,M)$.
The evaluation mapping 
\[
\Phi \colon A \to \A_u^E(\irr_\ZZ^M(A,H),L(H)), \quad \Phi(a)(\pi) = \pi(a),
\]
is a $\ast$-homomorphism and an isometry. Moreover,
\[
\arch(M) = \Phi^{-1}(\A_u^E(\irr_\ZZ^M(A,H),-)^+).
\]
\end{thm}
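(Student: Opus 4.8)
The plan is to deduce Theorem~\ref{rep1} directly from Theorem~\ref{int}, treating $\A_u^E(\irr_\ZZ^M(A,H),L(H))$ as a concrete $C^\ast$-algebra encoding of the collection of all irreducible $M$-positive representations realized on the fixed Hilbert space $H$. The key point justifying the replacement of the abstract index set $\irr_\ZZ^M(A)$ by $\irr_\ZZ^M(A,H)$ is the hypothesis $\dim H \ge \alpha_i(A,M)$: since every irreducible $M$-positive representation has a representation space of dimension at most $\alpha_i(A,M)$, it can be realized as a subrepresentation on $H$, and the equivariance condition (2) in the definition of $\A_u^E$ is precisely what makes such a realization independent of the choice of isometric embedding. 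So the suprema and the positivity conditions quantified over $\irr_\ZZ^M(A)$ in Theorem~\ref{int} may be rewritten as quantifications over $\irr_\ZZ^M(A,H)$.

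First I would verify that $\Phi$ is well-defined, i.e. that for each $a \in A$ the map $\pi \mapsto \pi(a)$ actually lands in $\A_u^E(\irr_\ZZ^M(A,H),L(H))$. Boundedness (condition (1)) is immediate from Proposition~\ref{cpeq}, which gives $\Vert \pi(a) \Vert \le \n_M(a)\Vert\pi(1)\Vert \le \n_M(a)$ uniformly in $\pi$; equivariance (condition (2)) holds because conjugating a representation by a partial isometry $u$ with $uu^*$ dominating the projection onto $H_\pi$ sends $\pi(a)$ to $u^*\pi(a)u$ by construction; and uniform continuity (condition (3)) follows from the definition of the topology of pointwise convergence on $\irr_\ZZ^M(A,H)$ together with the uniform bound. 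That $\Phi$ is a $\ast$-homomorphism is routine: the algebraic operations on $\A_u^E$ are pointwise, and each $\pi$ is itself a $\ast$-homomorphism, so $\Phi$ respects sums, products, and the involution, the latter because $\Phi(a^*)(\pi) = \pi(a^*) = \pi(a)^* = \Phi(a)(\pi)^*$.

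Next I would establish the isometry assertion. By the definition of the norm on $\A_u^E$ we have $\Vert \Phi(a)\Vert = \sup_{\pi\in\irr_\ZZ^M(A,H)} \Vert\pi(a)\Vert$, and by the remark that $\irr_\ZZ^M(A)$ can be replaced by $\irr_\ZZ^M(A,H)$ this supremum equals $\sup_{\pi\in\irr_\ZZ^M(A)}\Vert\pi(a)\Vert$, which is exactly $\n_M(a)$ by assertion~(1) of Theorem~\ref{int}. Hence $\Vert\Phi(a)\Vert = \n_M(a)$, so $\Phi$ is an isometry (in particular its kernel is $\I(M)$, but this need not be spelled out). For the final equality, I would combine assertion~(2) of Theorem~\ref{int} with the explicit description of the positive cone $\A_u^E(\irr_\ZZ^M(A,H),-)^+$: an element $a\in\sym(A)$ lies in $\arch(M)$ if and only if $\pi(a)$ is positive semidefinite for every $\pi\in\irr_\ZZ^M(A)$, equivalently for every $\pi\in\irr_\ZZ^M(A,H)$, which is precisely the condition that $\Phi(a)$ belongs to the positive cone. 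One should also check that $\Phi(a)$ automatically lies in the real subalgebra $\A_u^E(\irr_\ZZ^M(A,H),-)$, which follows from $\Phi(a)(\bar\pi) = \bar\pi(a) = \pi(a)^* = \Phi(a)(\pi)^*$.

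The main obstacle I anticipate is not any single inequality but the bookkeeping around realizing abstract irreducible representations on the fixed space $H$ and checking that the equivariance condition makes the evaluation map genuinely well-defined and the positivity characterization genuinely equivalent on $\irr_\ZZ^M(A,H)$ versus $\irr_\ZZ^M(A)$. In other words, the substance lives in the dimension bound $\dim H \ge \alpha_i(A,M)$ and in confirming that conditions (1)--(3) defining $\A_u^E$ are exactly the closure conditions satisfied by the family $\{\pi(a)\}_\pi$; once that correspondence is in place, every clause of the theorem is a direct translation of the corresponding clause of Theorem~\ref{int}.
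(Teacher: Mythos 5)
Your proposal is correct and matches the paper's approach: the paper offers no separate proof, explicitly presenting Theorem~\ref{rep1} as a rephrasing of Theorem~\ref{int} after noting that the hypothesis $\dim H \ge \alpha_i(A,M)$ lets one replace $\irr_\ZZ^M(A)$ by $\irr_\ZZ^M(A,H)$. Your verifications of well-definedness (boundedness via Proposition~\ref{cpeq}, equivariance and uniform continuity being essentially tautological for evaluation maps) simply fill in details the paper leaves implicit, consistent with its remark that conditions (2) and (3) of the definition of $\A_u^E$ play no essential role in this theorem.
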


When we compare Theorem \ref{rep1} with Theorem \ref{rep}
the following questions arise:
\begin{enumerate}
\item Is $\QQ \cdot \Phi(A)$ dense in $\A_u^E(\irr_\ZZ^M(A,H),-)$?
\item Is $\irr_\ZZ^M(A,H)$ compact in the topology of pointwise convergence?
\end{enumerate}

The answer to question (1) is yes if $\dim H \ge  \alpha_c(A,M)$;
see Theorem \ref{mgelfand}. 
Note that the properties (2) and (3) from the definition of 
$\A_u^E(\irr_\ZZ^M(A,H),L(H))$  are not required in the proof of Theorem \ref{rep1}.
However, they will be required in the proof of Theorem \ref{mgelfand}. 
We don't know the answer to question (1) if 
$\alpha_i(A,M) \le \dim H < \alpha_c(A,M)$.

We believe that the answer to question (2) is no (cf. \cite{bicht})
but we don't have an explicit counterexample. There exists
a natural compactification of $\irr_\ZZ^M(A,H)$, namely its closure
in the set of all additive mappings $\psi \colon A \to L(H)$
of norm $\le 1$. This follows from the fact that the unit ball of 
$L(H)$ is compact in the weak operator topology.

\section{Real CP-convexity Gelfand-Naimark Theorem}
\label{cp}

The aim of this section is to prove a real version of
the CP-convexity Gelfand-Naimark theorem from \cite{fuji3}
 similar to the real Gelfand-Naimark theorem from \cite{li}.

Let $A$ be a complex $C^\ast$-algebra and $H$ a complex Hilbert space.
Let us denote by $\A^E_u(\irr(A,H),L(H))$ the set of all mappings 
$\kappa \colon \irr(A,H) \to L(H)$ which are equivariant,
bounded and uniformly continuous as above.
Let $\alpha_c(A)$ denote the supremum of $\dim H_\pi$ where 
$\pi$ runs through all cyclic representations of $A$ on all complex Hilbert spaces. 
The CP-conveity Gelfand-Naimark theorem from \cite{fuji3} says:

\begin{thm} 
\label{cgelfand}
Let $A$ be a complex $C^\ast$-algebra and $H$ a complex Hilbert space 
such that $\dim H \ge \alpha_c(A)$. The Gelfand transform
$$g \colon A \to \A^E_u(\irr(A,H),L(H)), \quad g(c)(\psi) = \psi(c),$$
is a $\ast$-isomorphism and an isometry. 
\end{thm}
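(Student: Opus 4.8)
The plan is to deduce Theorem~\ref{cgelfand} from the work already set up for real $C^\ast$-algebras, by complexifying and invoking Fujimoto's original CP-convexity Gelfand--Naimark theorem from \cite{fuji3} in its complex form, or alternatively by adapting the proof strategy used for Theorem~\ref{rep1}. Since $A$ is already a complex $C^\ast$-algebra, the cleanest route is to observe that this theorem is essentially Fujimoto's theorem verbatim; the content of this section, as the author signals, is the \emph{real} version, so the statement of Theorem~\ref{cgelfand} is recalled here precisely as the known complex result that the real version will be compared against. Thus the first and primary step is to cite \cite{fuji3} and verify that the hypotheses match: one must check that the dimension bound $\dim H \ge \alpha_c(A)$ is exactly the condition under which every cyclic (hence every irreducible) representation of $A$ can be realized on the fixed Hilbert space $H$, so that $\irr(A,H)$ carries all the spectral information of $A$.

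With that realization in hand, the structural steps mirror those already carried out for $\A_u^E(\irr_\ZZ^M(A,H),L(H))$ earlier in the excerpt. First I would confirm that $g$ is a well-defined map into $\A^E_u(\irr(A,H),L(H))$, i.e. that for each $c \in A$ the assignment $\psi \mapsto \psi(c)$ is bounded (with $\Vert g(c)\Vert = \sup_\psi \Vert \psi(c)\Vert \le \Vert c \Vert$), equivariant (immediate from $(u^\ast \psi u)(c) = u^\ast \psi(c) u$), and uniformly continuous in the stated topologies. That $g$ is a $\ast$-homomorphism is then formal, since each evaluation $\psi$ is a $\ast$-homomorphism and the operations on $\A^E_u(\irr(A,H),L(H))$ are pointwise. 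The isometry claim, $\Vert g(c)\Vert = \Vert c\Vert$, is the analogue of assertion~(1) of Theorem~\ref{int}: it follows from the fact that in a complex $C^\ast$-algebra the norm equals the supremum of $\Vert \pi(c)\Vert$ over irreducible representations, together with the observation that the dimension hypothesis guarantees this supremum is attained over $\irr(A,H)$ rather than over irreducibles on arbitrary Hilbert spaces.

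The substantive step, and the one I expect to be the main obstacle, is \emph{surjectivity}: showing that every $\kappa \in \A^E_u(\irr(A,H),L(H))$ arises as $g(c)$ for some $c \in A$. This is the heart of Fujimoto's CP-convexity theory and cannot be reduced to a soft argument. The equivariance and uniform-continuity axioms on $\kappa$ are precisely what is needed to reconstruct an element of the bidual (or of $A$ itself, using that $A$ is already complete) that is compatible across all irreducible representations; the CP-convex structure encodes the matricial relations between representations of different dimensions and between a representation and its subrepresentations. I would invoke \cite{fuji3} for this reconstruction rather than reprove it, since the whole point of the section is to cite Fujimoto's complex theorem here and then prove its real counterpart separately. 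Concretely, I would state that injectivity is immediate from the isometry property, and that surjectivity together with the norm identity shows $g$ is an isometric $\ast$-isomorphism onto $\A^E_u(\irr(A,H),L(H))$, completing the identification.

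Because Theorem~\ref{cgelfand} is presented as the statement of Fujimoto's result, the honest proof is a one-line reference: the theorem is exactly \cite[Theorem in \S]{fuji3} once one checks the dimension normalization. The genuinely new labor of this section lies not here but in transferring this statement to real $C^\ast$-algebras equipped with the conjugation $\pi \mapsto \bar\pi$, which is what will be needed to close the loop back to Theorem~\ref{rep1} and hence to the commutative Jacobi theorem \ref{rep}.
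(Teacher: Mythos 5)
Your proposal matches the paper exactly: Theorem~\ref{cgelfand} is stated there without proof, introduced by the sentence ``The CP-convexity Gelfand-Naimark theorem from \cite{fuji3} says,'' so it is indeed just a citation of Fujimoto's complex result, with the new work of the section being the real version (Theorem~\ref{rgelfand}). Your additional remarks on well-definedness, the isometry via the supremum over irreducibles, and surjectivity being the deep part of \cite{fuji3} are accurate but go beyond what the paper records.
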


Now let us turn our attention to the real case. 
Let $B$ be a real $\ast$-algebra with complexification $B^\circ$
and $H$ a complex Hilbert space. Write $A^E_u(\irr_\RR(B,H),L(H))$
for the set of all mappings $\eta \colon \irr_\RR(B,H) \to L(H)$
which are equivariant, bounded and uniformly continuous.
Let $s \colon \irr_\RR(B,H) \to \irr(B^\circ,H)$
denote the natural correspondence of Lemma \ref{formup}.
The correspondence $s$ is a homeomorphism with respect 
to the topologies of pointwise convergence. The mapping 
$$A^E_u(\irr_\RR(B,H),L(H)) \to A^E_u(\irr(B^\circ,H),L(H)), \quad \eta \mapsto s \eta s^{-1},$$
is an $\ast$-isomorphism and isometry.

For every $\rho \in \irr_\RR(B,H)$ we define $\bar{\rho} \in \irr_\RR(B,H)$
by $\bar{\rho}(b)=\rho(b)^\ast$ for every $b \in B$. Write $\A^E_u(\irr_\RR(B,H),-)$
for the set of all $\eta \in \A^E_u(\irr_\RR(B,H),L(H))$ such that 
$\eta(\bar{\rho}) = \eta(\rho)^\ast$ for every $\rho \in \irr_\RR(B,H)$.
This is a real $C^\ast$-subalgebra of $\A^E_u(\irr_\RR(B,H),L(H))$.
The mapping
$$g_\RR \colon B \to \A^E_u(\irr_\RR(B,H),-), \quad g_\RR(b)(\eta) = \eta(b),$$
will be called \textit{the real Gelfand transform}. Since 
$g_\RR(b)(\bar{\rho}) = \bar{\rho}(b) = \rho(b)^\ast = g_\RR(b)(\rho)^\ast$
for every $b \in B$, it follows that $g_\RR(b) \in \A^E_u(\irr_\RR(B,H),-)$ for every $b \in B$.
Hence $g_\RR$ is well defined. Clearly, $g_\RR$ is a homomorphism of real $\ast$-algebras.

Theorem \ref{rgelfand} is a real version of Theorem \ref{cgelfand}.

\begin{thm}
\label{rgelfand}
Let $B$ be a real $\ast$-algebra and $H$ a complex 
Hilbert space such that $\dim H \ge \alpha_c(B^\circ)$.
The real Gefand transform $g_\RR \colon B \to \A^E_u(\irr_\RR(B,H),-)$
is a $\ast$-isomorphism and an isometry.
\end{thm}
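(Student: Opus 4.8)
**The plan is to reduce the real statement (Theorem \ref{rgelfand}) to the already-established complex CP-convexity Gelfand--Naimark theorem (Theorem \ref{cgelfand}) by transporting everything through the complexification $B^\circ$.** The excerpt has already set up the relevant machinery: there is a natural correspondence $s \colon \irr_\RR(B,H) \to \irr(B^\circ,H)$ which is a homeomorphism for the pointwise topologies (coming from Lemma \ref{formup}, case $M = B^+$), and the induced map $\eta \mapsto s\eta s^{-1}$ is a $\ast$-isomorphism and isometry between $\A^E_u(\irr_\RR(B,H),L(H))$ and $\A^E_u(\irr(B^\circ,H),L(H))$. So the skeleton of the proof is: first apply Theorem \ref{cgelfand} to the complex $C^\ast$-algebra $B^\circ$ (legitimate since $\dim H \ge \alpha_c(B^\circ)$ by hypothesis), obtaining that the complex Gelfand transform $g \colon B^\circ \to \A^E_u(\irr(B^\circ,H),L(H))$ is a $\ast$-isomorphism and isometry. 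Then I would show that $g_\RR$ is precisely the restriction of $g$ to $B \subset B^\circ$ followed by the transport isomorphism $\eta \mapsto s\eta s^{-1}$, so that the real transform inherits injectivity and isometry directly from the complex one.

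The main content is therefore \textbf{identifying the image of $g_\RR$ and proving surjectivity}. Concretely, I would verify the square
\[
g = (\eta \mapsto s\eta s^{-1}) \circ g_\RR \quad \text{on } B,
\]
by chasing the definitions: for $b \in B$ and $\rho \in \irr_\RR(B,H)$ one has $g(b)(s\rho) = (s\rho)(b) = \rho(b) = g_\RR(b)(\rho)$, using that $s\rho$ restricts to $\rho$ on $B$. This shows $g_\RR$ is an isometric $\ast$-embedding. For surjectivity I would argue that the complex $C^\ast$-algebra $\A^E_u(\irr(B^\circ,H),L(H))$ carries the conjugation $\kappa \mapsto \kappa^\dagger$ defined by $\kappa^\dagger(\psi) = \kappa(\bar\psi)^\ast$, whose fixed-point real $C^\ast$-subalgebra is exactly the image of $\A^E_u(\irr_\RR(B,H),-)$ under the transport map. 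Since $g$ is surjective and $B = \sym$-type fixed points of the conjugation $b + ib' \mapsto b - ib'$ on $B^\circ$ (that is, $B$ is the $+1$-eigenspace of the canonical conjugation of the complexification), $g$ maps $B$ onto the fixed-point subalgebra. This is the step that requires care.

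\textbf{The hard part will be the compatibility of conjugations.} I must check that the conjugation on $B^\circ$ (whose real points are $B$) is carried by the isomorphism $g$ to the conjugation $\kappa \mapsto \kappa^\dagger$ on the target, and that the latter transports under $\eta \mapsto s\eta s^{-1}$ to the defining condition $\eta(\bar\rho) = \eta(\rho)^\ast$ on $\A^E_u(\irr_\RR(B,H),-)$. This hinges on the relation $s(\bar\rho) = \overline{s\rho}$ between the two conjugations on representation spaces, where $\bar\rho(b) = \rho(b)^\ast$ and $\bar\psi(c) = \psi(c^\ast)^\ast$ for $c \in B^\circ$; one must confirm that $s$ intertwines them, which reduces to the explicit formula $\psi^\circ(b',b'') = \psi(b') + i\psi(b'')$ from Lemma \ref{formup} together with the involution $(x,y)^* = (x^*,-y^*)$ on $B^\circ$ from Example \ref{com}. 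Once these conjugations are matched, surjectivity of $g_\RR$ onto $\A^E_u(\irr_\RR(B,H),-)$ is immediate: any target element is the image of some self-conjugate element of $B^\circ$, hence of an element of $B$. The remaining assertions (that $g_\RR$ is a homomorphism and well-defined into the real subalgebra) are already recorded in the paragraph preceding the statement, so I would cite them rather than reprove them.
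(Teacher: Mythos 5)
Your overall strategy is exactly the paper's: reduce to Theorem \ref{cgelfand} for $B^\circ$ via the commutative square through $s$, deduce injectivity and isometry of $g_\RR$ from those of $g$, and obtain surjectivity by identifying the transported copy of $\A^E_u(\irr_\RR(B,H),-)$ inside $\A^E_u(\irr(B^\circ,H),L(H))$ as the fixed-point set of a conjugation that $g$ intertwines with the canonical conjugation of the complexification, whose fixed points are $B$.

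There is, however, a concrete error in the one step you yourself single out as the hard part. You define the conjugation on $\irr(B^\circ,H)$ by $\bar\psi(c) = \psi(c^\ast)^\ast$. Since every $\psi \in \irr(B^\circ,H)$ is a $\ast$-homomorphism, $\psi(c^\ast)^\ast = \bigl(\psi(c)^\ast\bigr)^\ast = \psi(c)$, so your $\bar\psi$ is just $\psi$: the conjugation degenerates to the identity. Consequently $\kappa^\dagger(\psi) = \kappa(\bar\psi)^\ast = \kappa(\psi)^\ast$, whose fixed points are the maps taking self-adjoint values (the hermitian part of the target), not the image of $\A^E_u(\irr_\RR(B,H),-)$; and the intertwining relation $s(\bar\rho) = \overline{s\rho}$ that you propose to verify actually fails, because $\bar\rho \ne \rho$ in general while $\overline{s\rho} = s\rho$ under your definition. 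The correct formula, which the paper uses, is $\bar\pi(c) = \pi(\bar c)^\ast$, where $\bar c$ denotes the complexification conjugation $(x,y) \mapsto (x,-y)$ on $B^\circ$ --- an additive, multiplicative, $\ast$-preserving, $i$-antilinear map, \emph{not} the involution $(x,y)^\ast = (x^\ast,-y^\ast)$. With that definition one checks $\overline{\rho^\circ} = (\bar\rho)^\circ$, the fixed-point set of $\kappa \mapsto \kappa^\dagger$ is exactly $\{s\eta s^{-1} \colon \eta \in \A^E_u(\irr_\RR(B,H),-)\}$, and $g(c)$ lands in it if and only if $c = \bar c$, i.e.\ $c \in B$; the rest of your argument then goes through as written and coincides with the paper's proof.
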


\begin{proof}
We have a commutative diagram
\[\begin{array}{ccc}
B^\circ & \stackrel{g}{\longrightarrow} & \A^E_u(\irr(B^\circ,H),L(H)) \\
\uparrow & & \uparrow \eta \mapsto s \eta s^{-1} \\
B & \stackrel{g_\RR}{\longrightarrow} & \A^E_u(\irr_\RR(B,H),-) 
\end{array}\]
where the vertical arrows are one-to-one. By Theorem \ref{cgelfand},
$g$ is one-to-one and onto. It follows that $g_\RR$ is one-to-one.
It remains to show that $g_\RR$ is onto.

For every $\pi \in \irr(B^\circ,H)$ write $\bar{\pi}$ for the mapping defined by
$\bar{\pi}(c) = \pi(\bar{c})^\ast$ for $c \in B^\circ$. Note that
$\overline{\rho^\circ} = (\bar{\rho})^\circ$ for every $\rho \in \irr_\RR(B,H)$.
It follows that
\[\begin{array}{l}
\A^E_u(\irr(B^\circ,H),-) := 
\{s \eta s^{-1} \colon \eta \in \A^E_u(\irr_\RR(B,H),-)\} 
= \\ = 
\{\gamma \in \A^E_u(\irr(B^\circ,H),L(H)) \colon \gamma(\bar{\pi}) = \gamma(\pi)^\ast 
 \text{ for all } \pi \in \irr(B^\circ,H)\}.
\end{array}\]
Pick any $c \in B^\circ$. Note that $g(\bar{c})(\pi)^\ast = g(c)(\bar{\pi})$ 
for every $\pi \in \irr(A^\circ,H)$. It follows that $g(c) \in \A^E_u(\irr(B^\circ,H),-)$
if and only if $g(c)(\pi)^\ast = g(\bar{c})(\pi)^\ast$ for every $\pi \in \irr(A^\circ,H)$.
Since $g$ is one-to-one, this is equivalent to $c = \overline{c}$. Since $g$ is onto,
it follows $g(B) = \A^E_u(\irr(B^\circ,H),-)$. Hence, $g_\RR$ is onto.
\end{proof}

The following corollary of Theorem \ref{rgelfand} complements Theorem \ref{rep1}.

\begin{thm}
\label{mgelfand}
Let $A$, $M$, $H$ and $\Phi$ be as in Theorem \ref{rep1}.
If  $\dim H \ge \alpha_c(A,M)$ then $\QQ \cdot \Phi(A)$ 
is dense in $A^E_u(\irr_\ZZ^M(A,H),-)$.
\end{thm}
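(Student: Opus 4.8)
The plan is to factor $\Phi$ through the canonical map $j\colon A \to A_M$ and the real Gelfand transform of $A_M$, and then to deduce density from Theorem \ref{rgelfand} together with the density of $j(A)$ in its completion $A_M$. Concretely, I would apply Theorem \ref{rgelfand} to the real $C^\ast$-algebra $B = A_M$ and transport everything back across the correspondence of Lemma \ref{formup}.

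First I would set up the identification of the two target algebras. Applying Lemma \ref{formup} with $B = A_M$, every $\psi \in \irr_\ZZ^M(A,H)$ factors as $\psi = \tilde\psi \circ j$ for a unique $\tilde\psi \in \irr_\RR(A_M,H)$, and $\psi \mapsto \tilde\psi$ is a bijection $\irr_\ZZ^M(A,H) \to \irr_\RR(A_M,H)$. I would check that this bijection is a homeomorphism for the topologies of pointwise convergence: every $M$-positive representation has norm $\le 1$ by Proposition \ref{cpeq}, and $j(A)$ is dense in $A_M$, so pointwise convergence tested on $j(A)$ and on all of $A_M$ coincide. A direct computation then shows that $\psi \mapsto \tilde\psi$ commutes with the conjugations $\pi \mapsto \bar\pi$ and with the partial-isometry action $\pi \mapsto u^\ast \pi u$, since both are defined through the values of the representations, which $j$ carries across densely. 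Hence the correspondence induces an isometric $\ast$-isomorphism
$$\theta \colon \A^E_u(\irr_\RR(A_M,H),-) \to \A^E_u(\irr_\ZZ^M(A,H),-), \quad \theta(\eta)(\psi) = \eta(\tilde\psi),$$
and from $\tilde\psi(j(a)) = \psi(a)$ one reads off immediately that $\Phi = \theta \circ g_\RR \circ j$, where $g_\RR \colon A_M \to \A^E_u(\irr_\RR(A_M,H),-)$ is the real Gelfand transform.

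Next I would verify the hypothesis of Theorem \ref{rgelfand}, namely $\dim H \ge \alpha_c((A_M)^\circ)$. The correspondences of Lemma \ref{formup} preserve cyclicity and the essential subspace $H_\pi$ (for the complexification step this uses that $\overline{\psi(A_M)H}$ is already a complex subspace of $H$), so cyclic $M$-positive representations of $A$ correspond to cyclic $\CC$-linear representations of $(A_M)^\circ$ with the same $H_\pi$; taking suprema of $\dim H_\pi$ yields $\alpha_c(A,M) = \alpha_c((A_M)^\circ)$, whence $\dim H \ge \alpha_c(A,M) = \alpha_c((A_M)^\circ)$. Theorem \ref{rgelfand} then gives that $g_\RR$ is an isometric $\ast$-isomorphism onto $\A^E_u(\irr_\RR(A_M,H),-)$. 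To conclude I would run a density argument: since $M$ is archimedean we have $\B(M) = A$, so $j(A)$, and a fortiori $\QQ \cdot j(A)$, is dense in the completion $A_M$; as $g_\RR$ is a surjective isometry it sends dense sets to dense sets, so $g_\RR(\QQ \cdot j(A))$ is dense in $\A^E_u(\irr_\RR(A_M,H),-)$, and applying the isometric isomorphism $\theta$ together with $\Phi = \theta \circ g_\RR \circ j$ and the $\QQ$-linearity of $j, g_\RR, \theta$ shows that $\QQ \cdot \Phi(A) = \theta(g_\RR(\QQ \cdot j(A)))$ is dense in $\A^E_u(\irr_\ZZ^M(A,H),-)$.

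I expect the main obstacle to be the construction of $\theta$: one must check carefully that the abstract correspondence of Lemma \ref{formup} is a homeomorphism intertwining both the conjugation and the equivariant partial-isometry structure, so that it genuinely induces an isometric $\ast$-isomorphism of the function algebras $\A^E_u(\cdot,-)$. This is precisely where the equivariance and uniform-continuity conditions (2) and (3) in the definition of $\A^E_u$, which were not needed for Theorem \ref{rep1}, come into play. Once this identification and the bookkeeping identity $\alpha_c(A,M) = \alpha_c((A_M)^\circ)$ are established, the density statement is a formal consequence of Theorem \ref{rgelfand}.
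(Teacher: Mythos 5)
Your proposal is correct and follows essentially the same route as the paper: factor $\Phi = \tilde{r}^{-1} \circ g_\RR \circ j$ through $A_M$ via the correspondence of Lemma \ref{formup}, invoke Theorem \ref{rgelfand} (using $\alpha_c(A,M) = \alpha_c((A_M)^\circ)$ to verify its hypothesis), and push the density of $\QQ \cdot j(A)$ in $A_M$ through the surjective isometries. Your added detail on why the correspondence is a homeomorphism intertwining the conjugation and equivariance structure is a point the paper leaves implicit, but it is the same argument.
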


\begin{proof}
Let $r \colon \irr_\ZZ^M(A,H) \to \irr_\RR(A_M,H)$ be the natural correspondence
from Lemma \ref{formup}. Clearly, $r$ is a homeomorphism with respect to the
topologies of pointwise convergence and it induces a mapping 
\[
\tilde{r} \colon  A^E_u(\irr_\ZZ^M(A,H),-) \to A^E_u(\irr_\RR(A_M,H),-), \quad
\kappa \mapsto r \kappa r^{-1},
\]
which is a $\ast$-isomorphism and an isometry. The diagram
\[\begin{array}{ccc}
A_M & \stackrel{g_\RR}{\rightarrow} & A^E_u(\irr_\RR(A_M,H),-) \\
j \uparrow &    & \uparrow \tilde{r}  \\
A & \stackrel{\Phi}{\rightarrow} & A^E_u(\irr_\ZZ^M(A,H),-) \\
\end{array}\]
is commutative. Since $\dim H \ge \alpha_c(A,M) = \alpha((A_M)^\circ)$,
Theorem \ref{rgelfand} implies that $g_\RR$ is onto. We know from
Theorem \ref{rep1} that $g_\RR$ is an isometry. It is clear from
the construction of $A_M$ in Section \ref{env} that $\QQ \cdot j(A)$
is dense in $A_M$. Since $g_\RR$ and $\tilde{r}$ are isometries and onto,
it follows that $\QQ \cdot \Phi(A) = \tilde{r}^{-1}(g_\RR(\QQ \cdot j(A)))$ 
is dense in $A^E_u(\irr_\ZZ^M(A,H),L(H)) = \tilde{r}^{-1}(g_\RR(A_M))$.
\end{proof}

\section{Comments on $\ast$-orderings}
\label{comments}

When functional analysts and real algebraic geometers talk about
ordered complex $\ast$-algebras, they don't mean the same thing.
For a functional analyst, an ordering on $A$ is a cone on $A$,
i.e. a subset $C \subset \sym(A)$ such that $C+C \subseteq C$ and $\RR^+ C \subseteq C$.
For a real algebraic geometer, an ordering on $A$ is usually a $\ast$-ordering,
i.e. a subset $P \subseteq \sym(A)$ such that $P+P \subset P$,
$aPa^* \subseteq P$ for every $a \in A$, $st+ts \in P$ for every $s,t \in P$,
$P \cap -P$ is a Jordan prime ideal and $P \cup -P = \sym(A)$; see \cite{mm2}.
Note that every $\ast$-ordering is a cone. The full matrix ring $\mat_n(\CC)$ ($n \ge 2$)
is a typical example of a complex $\ast$-algebra that is ordered for a functional analyst
and not orderable for a real algebraic geometer. Another example is group rings
$\CC[G]$ which are always orderable for a functional analyst and only in
special cases (for certain orderable groups) for a real algebraic geometer.

Let us recall the motivation for the definition of a $\ast$-ordering.
The most trivial example is $(\CC,\RR^+)$. If $A$ is a commutative 
complex $\ast$-algebra and $\phi \colon A \to \CC$ is a hermitian
homomorphism, then $P := \phi^{-1}(\RR^+) \cap \sym(A)$ is
a natural candidate for a $\ast$-ordering. We list its algebraic properties
($P+P \subseteq P$, $PP \subseteq P$, $aa^* \in P$ for every $a \in P$, 
$P \cap -P$ is a prime ideal and $P \cup -P = \sym(A)$) and take them
as axioms of a $\ast$-ordering. The noncommutative definition is 
a modification that makes most of the commutative theory work.

A definition of an ordering that is not too restrictive for functional analysts
and not too general for real algebraic geometers should follow the same
steps as in the commutative case. Let us consider the set $\Pi_n$ of all 
positive semidefinite hermitian matrices on $\mat_n(\CC)$ as the simplest ordering.
Let $A$ be a complex $\ast$-algebra, $\pi \colon A \to \mat_n(\CC)$
an irreducible $\ast$-representation and set $P = \pi^{-1}(\Pi_n) \cap \sym(A)$.
The algebraic properties of $P$ include the following:
\begin{enumerate}
\item $P+P \subseteq P$,
\item if $a,b \in P$ commute, then $ab \in P$,
\item $aPa^* \subseteq P$ for every $a \in A$,
\item $P \cap -P$ is the symmetric part of a prime ideal,
\item for every primitive hermitian idempotent $e \in A$,
$eAe$ is linearly ordered by $P \cap eAe$.
\end{enumerate}
Similar orderings have been considered in \cite{upor}.
It would be interesting to know whether an Artin-Schreier theory 
of such orderings can be developed.


\begin{thebibliography}{99}
\bibitem{upor} T. A. Sarymsakov, Sh. A. Ayupov, Dzh. Khadzhiev and V. I. Chilin,  
\textit{Uporyadochennye algebry} (Russian, Ordered algebras)
\lq\lq Fan\rq\rq{}, Tashkent, 1983. 303 pp.

\bibitem{wor} R. Berr and T. W\" ormann,
Positive polynomials on compact sets. 
Manuscripta Math. {\bf 104} (2001), no. 2, 135--143.

\bibitem{bicht} K. Bichteler, 
A generalization to the non-separable case of Takesaki's duality theorem for $C\sp{*} $-algebras. 
Invent. Math. {\bf 9} (1969/1970) 89--98.

\bibitem{const2} C. Constantinescu, 
\textit{$C\sp *$-algebras. Vol. 2. Banach algebras and compact operators.}
North-Holland Mathematical Library, Vol. 59. North-Holland Publishing Co., 
Amsterdam, 2001. xxii+597 pp. ISBN 0-444-50750-7.

\bibitem{const3} C. Constantinescu, 
\textit{$C\sp *$-algebras. Vol. 3. General theory of $C\sp *$-algebras.}
North-Holland Mathematical Library, Vol. 60. North-Holland Publishing Co., 
Amsterdam, 2001. xxii+417 pp. ISBN 0-444-50751-5.
 
\bibitem{dix} J. Dixmier, 
\textit{$C\sp*$-algebras.}
North-Holland Mathematical Library, Vol. 15. North-Holland Publishing Co., 
Amsterdam-New York-Oxford, 1977. xiii+492 pp. ISBN 0-7204-0762-1.

\bibitem{dub1} D. W. Dubois,              
A note on David Harrison's theory of preprimes. 
Pacific J. Math. {\bf 21} (1967) 15--19.

\bibitem{dub2} D. W. Dubois,  
Second note on David Harrison's theory of preprimes. 
Pacific J. Math. {\bf 24} (1968) 57--68.
 
\bibitem{ew} E. G. Effros and S. Winkler, 
Matrix convexity: operator analogues of the bipolar and Hahn-Banach theorems. 
J. Funct. Anal. {\bf 144} (1997), no. 1, 117--152.

\bibitem{fuji1} I. Fujimoto,
${\rm CP}$-duality for $C\sp *$- and $W\sp *$-algebras. 
J. Operator Theory {\bf 30} (1993), no. 2, 201--215. 
 
\bibitem{fuji2} I. Fujimoto,            
Decomposition of completely positive maps. 
J. Operator Theory {\bf 32} (1994), no. 2, 273--297. 
 
\bibitem{fuji3} I. Fujimoto,
A Gelfand-Naimark theorem for $C\sp *$-algebras. 
Pacific J. Math. {\bf 184} (1998), no. 1, 95--119. 
 
\bibitem{jacobi} T. Jacobi, 
A representation theorem for certain partially ordered commutative rings. 
Math. Z. {\bf 237} (2001), no. 2, 259--273. 

\bibitem{jp} T. Jacobi and A. Prestel,
Distinguished representations of strictly positive polynomials.
J. Reine Angew. Math. {\bf 532} (2001), 223--235. 

\bibitem{kad} R. V. Kadison, 
A representation theory for commutative topological algebra. 
Mem. Amer. Math. Soc., 1951, (1951). no. 7, 39 pp. 

\bibitem{klep} I. Klep, 
A Kadison-Dubois representation for associative rings, to appear in J. Pure Appl. Alg. 

\bibitem{li} Bingren Li,
\textit{Real operator algebras},
World Scientific Publishing Co., Inc., River Edge, NJ, 2003. 
xiv+241 pp. ISBN: 981-238-380-8 

\bibitem{li1} Bingren Li,
\textit{Introduction to operator algebras.}
World Scientific Publishing Co., Inc., River Edge, NJ, 1992. 
xvi+738 pp. ISBN 981-02-0941-X.

\bibitem{mm1} M. Marshall, 
\textit{Positive polynomials and sums of squares.}
Instituti Editoriali e Polografici Internazionali,
Pisa-Roma, 2000. vi+68 pp. ISBN 88-8147-246-3.

\bibitem{mm2} M. Marshall,
$*$-orderings on a ring with involution. 
Comm. Algebra {\bf 28} (2000), no. 3, 1157--1173.

\bibitem{mrep} M. Marshall, 
A general representation theorem for partially ordered commutative rings. 
Math. Z. {\bf 242} (2002), no. 2, 217--225. 

\bibitem{palmer} T. W. Palmer,
Real $C\sp*$-algebras. 
Pacific J. Math. \textbf{35} (1970) 195--204.

\bibitem{ped} G. K. Pedersen,
\textit{Analysis now.}
Graduate Texts in Mathematics, 118. 
Springer-Verlag, New York, 1989. xiv+277 pp.  ISBN 0-387-96788-5.

\bibitem{prestel} A. Prestel, 
\textit{Lectures on formally real fields.}
Lecture Notes in Mathematics, 1093. Springer-Verlag, 
Berlin, 1984. xi+125 pp. ISBN 3-540-13885-4.

\bibitem{prestel2} A. Prestel,
Representations of real commutative rings,
preprint (2002).

\bibitem{rudin} W. Rudin,
\textit{Functional analysis. Second edition.}
 International Series in Pure and Applied Mathematics. McGraw-Hill, Inc., 
New York, 1991. xviii+424 pp. ISBN 0-07-054236-8.

\bibitem{sch} K. Schm\" udgen, 
\textit{Unbounded operator algebras and representation theory.}
 Operator Theory: Advances and Applications, 37. Birkhäuser Verlag, 
 Basel-Boston-Berlin, 1990. 380 pp. ISBN 0-8167-2321-3.
 
\bibitem{st} M. H. Stone,
\textit{A general theory of spectra. I}
Proc. Mat. Acad. Sci. USA \textbf{26} (1940), 280--283.

\bibitem{ww} C. Webster and S. Winkler,
The Krein-Milman theorem in operator convexity. 
Trans. Amer. Math. Soc. 351 (1999), no. 1, 307--322.

\bibitem{wongng} Y. C. Wong and K. F. Ng,
\textit{Partially ordered topological vector spaces.}
Oxford Mathematical Monographs. Clarendon Press,
Oxford, 1973. x+217 pp. ISBN 0-19-853523-6.
 

\end{thebibliography}
\end{document}